\newtheorem{theorem}{Theorem}[section]
\newtheorem{proposition}[theorem]{Proposition}
\newtheorem{corollary}[theorem]{Corollary}
\theoremstyle{definition}
\newtheorem{definition}[theorem]{Definition}
\newtheorem{example}[theorem]{Example}
\theoremstyle{remark}
\newtheorem{remark}[theorem]{Remark}
\numberwithin{equation}{section}
\newcommand{\g}{g}
\newcommand{\Cstar}{C^\ast}
\newcommand{\C}{$C^\ast$}
\newcommand{\G}{\mathcal{G}}
\newcommand {\N}{\mathbb{N}}
\begin{document}

\title[Interval maps, branching systems and relative graph algebras]{Unifying interval maps and branching systems with applications to relative graph C*-algebras}


\author{C.\ Correia Ramos}
\address{Centro de Investiga\c c\~ao em Matem\'atica e
Aplica\c c\~oes,
 Department of Mathematics,
Universidade de \'{E}vora,
R.\ Rom\~{a}o Ramalho,
59, 7000-671 \'{E}vora,\\
 Portugal}
   \email{ccr@uevora.pt}
   
 \author{D.\ Gon\c{c}alves}
\address{Departamento de Matem\'atica,
Universidade Federal de Santa Catarina,
Florian\'opolis, 88040-900\\
 Brazil}
   \email{daemig@gmail.com}

\author{N.\ Martins}
\address{Department of Mathematics, CAMGSD,
Instituto Superior T\'{e}cnico, University of Lisbon,
Av.\ Rovisco Pais 1, 1049-001 Lisboa, Portugal}
\email{nmartins@math.tecnico.ulisboa.pt}

\author{P.R.\ Pinto}
\address{Department of Mathematics, CAMGSD,
Instituto Superior T\'{e}cnico, University of Lisbon,
Av.\ Rovisco Pais 1, 1049-001 Lisboa, Portugal}
\email{ppinto@math.tecnico.ulisboa.pt}

\begin{abstract} We describe Markov interval maps via branching systems and develop the theory of relative branching systems, characterizing when the associated representations of relative graph C*-algebras are faithful. When the Markov interval maps $f$ have escape sets, we use our results to characterize injectivity of the associated relative graph algebra representations, improving on previous work by the first, third, and fourth authors. 
\end{abstract}

\maketitle

{\it Mathematics Subject Classification}: Primary 46L05, 37B10; 
Secondary 37E05.

{\it Keywords}: Relative graph C$^\ast$-algebra; Branching systems; Representation; Transition matrix; Interval map.


\section{Introduction}

Our main goal is to find faithful representations of relative graph algebras, on separable Hilbert spaces, that naturally appear when we consider the open dynamics systems arising from Markov interval maps with escape points. Our study indicates a unifying perspective among the subjects of interval maps, (relative) branching systems, and relative graph algebras, whenever the underlying graphs are finite. This is built starting from previous work on the construction of branching systems and the associated representations of graph algebras by the second author \cite{CantoGoncalves, FGG, ghr, GR2, Daniel}, and by the three other authors in relating open dynamical systems and representations of relative graph algebras (underlying the transition matrices) on the orbit spaces \cite{CMP, RMP10, RMP20, RMP11}.
  
For a directed (finite) graph $\G$ -- and $V$ a subset of vertices in $\G$ -- the relative graph \C-algebra $\Cstar(G,V)$, introduced by Muhly and Tomforde in \cite{Tom}, is a universal \C-algebra that allows the simultaneous study of the associated graph algebra (Cuntz-Krieger algebra if $V$ is the full set of vertices $\G^0$) and Toeplitz algebra (when $V=\emptyset$). Among the applications of relative graph \C-algebras we mention that relative graph \C-algebras are the key ingredient in the study of KMS states associated to graph algebras done in \cite{toke} and have been generalized to finitely aligned higher rank graphs, see \cite{Sims}. 

Relative graph \C-algebras have purely algebraic siblings, called Cohn-Leavitt path algebras, which are the base of the theory of Leavitt path algebras, see \cite{AASbook}. It is from the purely algebraic context that we draw inspiration to define an analytical {\it relative branching system} (the algebraic version is defined in  \cite{CantoGoncalves}). Then, every such relative branching system (which arises from a tuple $(\G,V)$, of a graph and a subset of vertices, on a measure space $(X,\mu)$) yields a representation
\begin{equation}\label{eq1}
  \pi:\ \Cstar(\G,V)\to B(L^2(X,\mu))
\end{equation}
of the relative \C-algebra $\Cstar(\G,V)$ on the Hilbert space $L^2(X,\mu)$. Building on the work in \cite{GLR} for graph \C-algebras, our study leads to a characterization of those representations that are faithful (in terms of the underlying branching systems). These results have interest in their own right, as the current streamline of research on the faithfulness of representations of ultragraph algebras and higher rank graph algebras from branching systems shows, see \cite{FGG, FGJKP, JorgensenFarsi, GLR2, ghr}.

As we mentioned above, we will connect (and apply) the theory of branching systems with the open dynamics of interval maps. More precisely, we  consider the class of open dynamics arising from interval maps $g: \hbox{dom}(\g)\to I$ (with dom$(g)\subset I$) the domain of $\g$ with escape points, so that the orbit of a point
$x$ can be in the \textit{escape set} of $g$; namely, $g^{k}(x)\in I$
does not belong to the domain of $f$ for a certain $k\in \mathbb{N}$, see \cite{RMP11}. Then the orbit of $x$ generates a Hilbert space 
$$H_x=\ell^2(\hbox{orb}(x)).$$
The study of the open dynamics $(I,g)$ uses its transition matrix $A_g$ as well as the escape transition matrix $\widehat{A}_g$, that encompass not only the transitions among the Markov subintervals but also the transitions into the escape set $E_g$, see \cite{RMP11}. The underlying symbolic dynamics allows us to build a set $V_x$ of vertices in the graph $\G_{A_g}$ of the transition matrix $A_g$ (see \eqref{eqvx} below) and a representation
\begin{equation}\label{eqint2}
\nu_x:\ \Cstar(\G_{A_g},V_x)\to B(H_x)
\end{equation}
of the relative graph algebra $\Cstar(\G_{A_g},V_x)$ on $H_x$, see \cite{RMP20}. 
In this paper, we construct a relative branching system associated with $(\G_{A_g},V_x)$ on $(X,\mu)$, where $X$ is the orbit of $x$ (see \eqref{eqXmu}) and $\mu$ is the counting measure. Then, we show that the representation of $\Cstar(\G_{A_g},V_x)$ arising from this branching system (as in \eqref{eq1}) and $\nu_x$ (as in \eqref{eqint2}) coincide.

Once the representation $\nu_x$ (see \eqref{eqint2}) is translated into the relative branching systems framework we can apply, for $\nu_x$, our characterization of faithful representations arising from relative branching systems. As a corollary, we obtain that $\nu_x$ is a faithful representation, for example, when $A_g$ is an irreducible matrix, even if $g$ is not an expansive map (thus improving \cite[Thm.\ 3.1]{RMP20}). In particular, we can produce faithful representations of Toeplitz algebras (the cases with $V_x=\emptyset$), which is a non-simple \C-algebra.

The plan for the rest of the paper is as follows. 
In Sect.\ \ref{secbackg} we review some necessary background, starting in Subsect.\ \ref{secmarkovescape} with the elements of the framework of the open dynamics provided by the interval maps with escape sets, followed by the notion of relative graph algebras and a revision of the extended graph (Subsect.\ \ref{relativo}). In Subsect.\ \ref{subrepsescape}, we write the representations of the relative graphs algebras on the Hilbert spaces $H_x$ attached to the orbits obtained in \cite{RMP20}, and state \cite[Thm.\ 3.1]{RMP20} as we will generalize it for other interval maps (not necessarily aperiodic or non-expansive). 

In Sect.\ \ref{BSU}, we put forward a notion of relative branching system, on a measure space $(X,\mu)$, associated to a pair $(\G,V)$. Our definition enables us to prove, in Prop.\ \ref{reprelativebs}, that every such relative branching system gives rise to a representation $\pi$ of the relative graph \C-algebra $\Cstar(\G,V)$. 
Then, we use the extended graph $E(\G)$ associated with the tuple $(\G,V)$ to naturally relate the above representation $\pi$ of $\Cstar(\G,V)$ and the representation of the graph algebra $\Cstar(E(\G))$ arising from a certain branching system associated with $E(\G)$, see Prop.\ \ref{repcommute}. This enables us to obtain one of the main results of this paper, Thm.\ \ref{jacare11}, which characterizes when the representation $\pi$ is faithful (which is an analytical version of \cite[Thm.\ 5.6]{CantoGoncalves} and a generalization of \cite[Thm.\ 3.1]{GLR} to relative graph algebras). In Sect.\ \ref{sectintmapsBS}, we apply Thm.\ \ref{jacare11} to the framework of interval maps $(I,g)$, where every point $x\in I$ leads to a construction of a relative branching system as shown in Prop.\ \ref{propintmapsbsysts}. Then, in Thm.\ \ref{repequivalence} we show that the representation $\pi$ arising from the relative branching system and the one $\nu_x$ reviewed in Thm.\ \ref{rmp20theorem} coincide. Thanks to Thm.\ \ref{jacare11}, we are able to conclude when $\nu_x$ is faithful (see Thm.\ \ref{injectiveMap}) even in contexts without an aperiodicity or expansivity assumption on the interval map $g$. This is applied in Sect\ \ref{secapps} to elaborate several faithful representations of relative graph algebras that are not covered by Thm.\ \ref{rmp20theorem}.


\section{Background}

In this paper we denote the natural numbers by $\N$ and the set $\{0,1,2, \ldots\}$ by $\N_0$.

\label{secbackg}

\subsection{Markov interval maps with escape sets}
\label{secmarkovescape}

Given $n\in \mathbb{N}$, let
\[\Gamma=\{c_{0},c_{1}^{-},c_{1}^{+},...,c_{n-1}^{-},c_{n-1}^{+},c_{n}\}\]
be an ordered set of (at most) $2n$ real numbers such that
\begin{equation}
c_{0}<c_{1}^{-}\leq c_{1}^{+}<c_{2}^{-}\leq ...<c_{n-1}^{-}\leq
c_{n-1}^{+}<c_{n}\text{.}
\label{eq2}
\end{equation}

Given $\Gamma $ as above, we define the collection of closed intervals
$C_{\Gamma }=\{I_{1},...,I_{n}\}$, with
\begin{equation}
I_{1}=\left[ c_{0},c_{1}^{-}\right],\ ...,\ I_{j}=\left[
c_{j-1}^{+},c_{j}^{-}\right],..., I_{n}=\left[c_{n-1}^{+}, c_{n}\right].
\label{eq3}
\end{equation}%
We also consider the collection of open intervals $\{E_{1},...,E_{n-1}\}$,
with
\begin{equation}
E_{1}=\left] c_{1}^{-},c_{1}^{+}\right[,\ ...,\ E_{n-1}=\left]
c_{n-1}^{-},c_{n-1}^{+}\right[,  \label{eq4}
\end{equation}%
in such a way that $I:=[c_{0},c_{n}]=\left(\cup _{j=1}^{n}I_{j}\right)
\bigcup \left(\cup _{j=1}^{n-1}E_{j}\right)$.


We now specify the interval maps for which we can construct partitions of
the interval $I$ as in \eqref{eq2}, \eqref{eq3} and \eqref{eq4}.

\begin{definition}[cf.\ \cite{CMP}]
Let $I\subset \mathbb{R}$ be an interval. A measurable map $\g$ is called a Markov interval map (equivalently it is said to be in the class $M(I)$) if it satisfies the following properties:

\begin{enumerate}
\item[(P1)] \textrm{[}Existence of a finite partition in the domain of $\g$\textrm{]} There is a partition $C=\left\{I_{1},...,I_{n}\right\}$ of closed
intervals with $\#\left(I_{i}\cap I_{j}\right)\leq 1$ for $i\neq j$,
$\mathrm{dom}(\g)=\bigcup _{j=1}^{n}I_{j}\subset I$, $\min(I_1)=\min(I)$, $\max(I_n)=\max(I)$, $\mathrm{im}(\g)=I$ and $\g\vert_{I_i}$ is monotonous (for $i=1,...,n$).

\item[(P2)] \textrm{[}Markov property\textrm{]} For every $i=1,...,n$ the
set $\g(I_{i})\cap \left(\bigcup_{j=1}^{n}I_{j}\right)$ is a non-empty union
of intervals from $C$.

\end{enumerate}
\label{def1}
Furthermore, $\g\in M(I)$ is said to be
\begin{itemize}
\item[(P3)] \emph{Expansive} if $\g_{|I_{j}}\in \mathcal{C}^{1}(I_{j})$, is monotone and $|\g_{|I_{j}}^{\prime }(x)|>b>1$, for every $x\in
I_{j},j=1, ..., n$, and some $b$.

\item[(P4)] \emph{Irreducible} if for every $i,j$ there is a natural number $q$ such that\\ $I_i\, \subset\,
\g^{q}(I_{j})$.
\item[(P5)] \emph{Aperiodic} if for every interval $I_{j}$ with
$j=1,...,n$ there is a natural number $q$ such that $\mathrm{dom}(\g)\subset
\g^{q}(I_{j})$.
\end{itemize}

\end{definition}

\begin{remark}
In \cite{CMP, RMP20} a map $\g$ is said to be a Markov map if it satisfies conditions $(P1), (P2), (P3)$ and $(P5)$ above. Since we will generalize the results in \cite{RMP20} to maps that need to satisfy only $(P1)$ and $(P2)$, we decided to call such maps of Markov maps.
\end{remark}

The minimal partition $C$ satisfying Def.\ \ref{def1} is denoted by $C_{\g}$. We remark that the transitions between the intervals can be encoded in the so-called (Markov) transition $n\times n$ matrix
$A_{\g}=(a_{ij})$, given by:
\begin{equation}
a_{ij}=\left\{
\begin{array}{l}
1\text{ if }\g(\mathring{I}_{i})\supset \mathring{I}_{j}, \\
0\text{ otherwise}
\end{array}
\right.  \label{mx}
\end{equation}
where $\mathring{A}$ denotes the interior of the set $A$.
A map $\g\in M(I)$ (together with the minimal
partition $C_{\g}=\left\{ I_{1},...,I_{n}\right\}$) determines:

\begin{enumerate}[(i)]
\item The $\g$-invariant set $\Omega _{\g}:=\{x\in I:\ \g^{k}(x)\in \mathrm{dom}%
(\g)$ for all $k=0,1,...\}$.

\item The collection of open intervals $\left\{ E_{1},...,E_{n-1}\right\} $,
such that ${I\setminus \bigcup _{j=1}^{n}I_{j}}=\bigcup _{j=1}^{n-1}E_{j}$.

\item The transition matrix $A_{\g}=\left( a_{ij}\right)_{i,j=1,..,n}$.
\end{enumerate}

Let $I$ be an interval and $\g\in M(I)$. Once we chose a
Markov partition $C$, then there exists a unique set $\Gamma$ of boundary points
satisfying \eqref{eq2}, \eqref{eq3} and \eqref{eq4}. From the Markov property we have that $\g\left(\Gamma \right) \subset \Gamma$. We define the generalized orbit of a point $x\in I$ as 
\[R_\g(x):=\{y\in I: \g^n(x)=\g^m(y) \text{ for some } n,m \in \mathbb{N}_0 \}. \]


The set
\begin{equation}
\tilde{E}_{\g}:=I\setminus \Omega_g = \bigcup_{k=0}^{\infty }\g^{-k}\left(\bigcup _{j=1}^{n-1}E_{j}\right)
\end{equation}
is called the \textit{escape set}, but for our purposes it is enough to consider the subset of $\tilde{E}_g$ given by 
\begin{equation}
{E}_{\g}:=\bigcup_{k=1}^{\infty }\g^{-k}\left(\bigcup _{j=1}^{n-1}E_{j}\right).
\end{equation}
Every point in $E_\g$ will eventually
fall, under iteration of $\g$, into some interval $E_{j}$
(where $\g$ is not defined) and the iteration process ends.
Observe that $x$ is in $E_{\g}$ of $\g$ if, and only if, there is $k\in
\mathbb{N}$ such that $\g^{k}\left(x\right) \notin \mathrm{dom}(\g)$.

Let $\g\in M(I)$ be such that $E_{\g}\neq \emptyset$. This means
that there is at least one non empty open interval
$E_{j}=\left]c_{j}^{-},c_{j}^{+}\right[ $,
with $c_{j}^{-}\neq c_{j}^{+}$, with $j\in\{1,...,n-1\}$.
The non-empty open subinterval $E_{j}$ is called an \textit{escape interval}.

In order to describe symbolically the escape orbits, we extend the symbol
space adding a symbol for each escape interval $E_{j}$, which will represent
an end for the symbolic sequence. For each escape interval $E_{j}$ we
associate a symbol $\widehat{j}$ to distinguish from the symbol associated to
the interval $I_{j}$ of the partition. That is, we consider the symbols ordered by:
\begin{equation}
1<\widehat{1}<2<\widehat{2}< ... < n-1< \widehat{n-1} <n\text{.}
\label{eqorder}
\end{equation}

If $E_{j}$ is not an interval, that is $E_{j}=\emptyset $, then there is no symbol
$\widehat{j}$. Moreover, we define
\begin{equation}
\Sigma_{E_\g}=\left\{\widehat{j}:  E_{j}\neq \emptyset,\ j\in\left\{1,..., n-1\right\}
\right\}.
\label{eqhatsymbols}
\end{equation}

For
every $y\in E_{\g}$ there is a least natural number $\tau \left( y\right) $
such that $\g^{\tau \left(y\right)}\left(y\right)\notin\mathrm{dom}\left(
\g\right)$, which means that, $\g^{\tau \left( y\right)}\left(y\right)\in
E_{j}$, for some $j$ such that $E_{j}\neq\emptyset$.
The final escape point, for the orbit of $y$, is then denoted by
$e\left( y\right) :=\g^{\tau \left( y\right)}\left( y\right) $ and the final
escape interval index is denoted by $\iota \left(y\right) $, that is, if
$\g^{\tau \left( y\right) }\left( y\right) \in E_{j}$ then $\iota \left(
y\right) =\widehat{j}$.

Thus we have an index set $\{1,...,n\}\bigcup\Sigma_{E_\g}$ which is ordered as in \eqref{eqorder} and \eqref{eqhatsymbols}.
To deal with the possible transitions from Markov transition intervals to
escape intervals we define the escape transition matrix $\widehat{A}_{\g}$ as follows.

\begin{definition}[see \cite{RMP10, RMP11}]
Given the transition matrix $A_\g$ as in \eqref{mx}, we define a matrix
 $\widehat{A}_{\g}=\left( \widehat{a}_{ij}\right)$ indexed by $\{1,...,n\}\cup\Sigma_{E_\g}$ such that
\begin{equation}
\widehat{a}_{ij}=\left\{
\begin{array}{ll}
a_{ij}&\text{ if } i,j\in\{1,...,n\}, \\
1&\text{ if } i\in \{1,...,n\},\ j=\hat{k}\in \Sigma _{E_\g}\ \hbox{and}\ \mathring{I}_{i}\cap \g^{-1}\left(
E_{k}\right) \neq \emptyset,\\
0&\text{  }  \hbox{otherwise}.
\end{array}%
\right.
\label{mmx}
\end{equation}
\label{defhatA}
\end{definition}
For row and column labeling, the matrix $\widehat{A}_\g$ is defined by considering the order in \eqref{eqorder}.

\subsection{Relative graph algebras.}\label{relativo}

We recall that a (directed) graph $\G$ is a quadruple $(\G^0,\G^1, r,s)$ consisting of two countable sets $\G^0$ and $\G^1$, and two maps $r,s:\G^1\to\G^0$. We think of $\G^0$ as the set of vertices of $\G$, and every $e\in\G^1$ is regarded as an arrow pointing from $s(e)$ to $r(e)$. 
For $v\in\G^0$, we call $v$ a sink if $s^{-1}(v)=\emptyset$, and call $v$ a source if $r^{-1}(v)=\emptyset$. 
In this paper we only work with finite graphs.  

We also recall that a path of length $n$ in $\G$ is a tuple $(e_i)_{i=1}^n\in \Pi_{i=1}^n \G^1$ such that $r(e_i)=s(e_{i+1})$ for $i=1,...,n-1$. For a path $\gamma=\gamma_1\ldots \gamma_n$ we define $\gamma^0:=\{v\in \mathcal{G}^0:v=s(\gamma_i), i=1,\ldots,n\}$. The path $(e_i)_{i=1}^n$ is called a cycle if $s(e_1)=r(e_n)$. A cycle $(e_i)_{i=1}^n$ is called simple if $r(e_i)\not=r(e_j)$ for all $i\not= j$. Finally we say that a cycle $(e_i)_{i=1}^n$ has no exit if $s^{-1}(s(e_i))=e_i$ for all $i=1,...,n$.

For any $V\subset \G^0$, the {\it relative graph algebra} $\Cstar(\G,V)$ (see \cite[Def.\ 3.4]{Tom}) is the universal \C-algebra generated by partial isometries $\{s_e: e\in \G^1\}$ and mutually orthogonal projections $\{p_v: v\in \G^0\}$ satisfying the following set of relations $\mathcal{R}_V$:

\begin{eqnarray}
  s_e^\ast s_e &=&  p_{r(e)}, \label{eqsesepr}\\
   s_e s_e^\ast &\leq& p_{s(e)},\ \hbox{for all}\ e\in \G^1, \ \hbox{and} \label{eqseseps}\\
  p_v &=& \sum_{e\in \G^1:\ s(e)=v} s_e s_e^\ast,  \label{eqpv=}
\end{eqnarray}
 for all $v\in V$ such that $\{e\in \G^1: s(e)=v\}\not=\emptyset$.

As shown in \cite{Tom}, every relative graph C*-algebra can be seen as a graph C*-algebra, by using the associated extended graph. We recall these results below.

\begin{definition}\label{extendedgraph} Let $\mathcal{G}$ be an graph and $V\subseteq \mathcal G^0$. Define the graph $E(\G)=(E^0,E_1,r_{E(\G)},s_{E(\G)})$ so that $E^0= \G^0 \cup \{v':v\in E^0, v \notin V\}$ and $E^1= \G^1 \cup \{e': e\in \G^1, r(e)\notin V\}$. Furthermore, for $e\in \G^1$, let $r_{E(\G)}(e)=r(e)$, $s_{E(\G)}(e)=s(e)$ and, if $r(e)\notin V$, let $r_{E(\G)}(e')=r(e)'$ and $s_{E(\G)}(e')=s(e)$.
$E(\G)$ is called the {\it extended graph} of $(\G,V)$.
\end{definition}

We recall the isomorphism between $\Cstar(\G,V)$ and $ \Cstar(E(\G))$ below.

\begin{proposition}\label{isorelat}(cf.\ \cite{AASbook} Theorem 1.5.18)

 Let $\mathcal{G}$ be a graph and $V\subseteq \mathcal G^0$ and let $E$ be the extended graph $E(\G)$ as defined above. Then, there is an isomorphism $\phi:C^*(\G,V) \rightarrow C^*(E)$ such that 
 $$\phi(p_v)=\begin{cases} p_v+p_{v'} \text{ if } v\notin V\cr 
 p_v\ \text{otherwise} \end{cases}\ \hbox{and}\ \ \phi(s_e)=\begin{cases} s_e \text{ if }\ r_E(e)\in V \cr s_e+s_{e'}\ \text{if}\ r(e)\notin V. \end{cases}$$
 
 Moreover, the inverse of $\phi$ is given by an isomorphism $\psi: C^*(E)\rightarrow C^*(\G,V)$ such that 
 
 $$\psi(p_v)=\begin{cases}p_v\ \text{if}\ v\in V,\cr 
  \sum_{e\in s^{-1}(v)} s_e s_e^*\ \text{if}\ v\notin V,\end{cases},\ \ \psi(p_{v'})=p_v-\sum_{e\in s^{-1}(v)}s_es_e^*$$  
  and   $$\psi(s_e)=\begin{cases}
  s_e\ \text{if}\ r(e)\in V, \cr
  \sum_{f\in s^{-1}(v)}s_fs_f^*\ \text{if} \ r(e)=v \notin V,\ \ 
  \text{and}\ \psi(s_{e'})=s_e (p_v-\sum_{f\in s^{-1}(v)}s_fs_f^*).
  \end{cases}$$ 
\end{proposition}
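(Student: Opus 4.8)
The plan is to obtain $\phi$ and $\psi$ directly from the universal properties of $\Cstar(\G,V)$ and $\Cstar(E)$, and then to verify that the two maps are mutually inverse by checking on generators; I will use throughout the standard Cuntz--Krieger identity $s_e^\ast s_f=\delta_{e,f}\,p_{r(e)}$, which forces distinct edges to have orthogonal range projections and hence makes $\sum_{s(e)=v}s_es_e^\ast$ a projection dominated by $p_v$. To build $\phi$ I would define, inside $\Cstar(E)$, the elements $P_v:=p_v$ for $v\in V$ and $P_v:=p_v+p_{v'}$ for $v\notin V$, together with $S_e:=s_e$ when $r(e)\in V$ and $S_e:=s_e+s_{e'}$ when $r(e)\notin V$. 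One then checks that $\{P_v,S_e\}$ satisfies the relations $\mathcal{R}_V$: the $P_v$ are mutually orthogonal since the relevant vertices of $E$ are distinct; the equalities $S_e^\ast S_e=P_{r(e)}$ and $S_eS_e^\ast\le P_{s(e)}$ follow from $s_e^\ast s_{e'}=0=s_es_{e'}^\ast$ and from $r_E(e')=r(e)'$, $s_E(e')=s(e)$; and, for $v\in V$, the relation $P_v=\sum_{s(e)=v}S_eS_e^\ast$ is, after expanding the primed edges, exactly the Cuntz--Krieger relation at $v$ in $\Cstar(E)$, because the edges of $E$ emitted by $v$ are the $e$ with $s(e)=v$ together with the $e'$ with $s(e)=v$ and $r(e)\notin V$. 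The universal property of $\Cstar(\G,V)$ then delivers $\phi$ with the stated values.

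For $\psi$ I would exhibit a Cuntz--Krieger $E$-family inside $\Cstar(\G,V)$. Writing $g_v:=p_v-\sum_{s(e)=v}s_es_e^\ast$ (a projection, by the remark above), set $Q_v:=p_v$ for $v\in V$, and $Q_v:=\sum_{s(e)=v}s_es_e^\ast$, $Q_{v'}:=g_v$ for $v\notin V$; for edges set $T_e:=s_e$ when $r(e)\in V$, and $T_e:=s_eQ_{r(e)}$, $T_{e'}:=s_eQ_{r(e)'}$ when $r(e)\notin V$. The orthogonality of the $Q$'s is immediate since $Q_v,Q_{v'}\le p_v$, and $T_f^\ast T_f=Q_{r_E(f)}$ follows from $s_e^\ast s_e=p_{r(e)}$ together with $Q_{r(e)},Q_{r(e)'}\le p_{r(e)}$ (which also shows each $T_f$ is a partial isometry with the correct initial projection). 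The Cuntz--Krieger relation $Q_w=\sum_{s_E(f)=w}T_fT_f^\ast$ has to be verified only at unprimed vertices, since every primed vertex $v'$ emits no edge of $E$ and is therefore a sink; at an unprimed $v$ the primed and unprimed copies of each outgoing $\mathcal{G}$-edge recombine via $T_eT_e^\ast+T_{e'}T_{e'}^\ast=s_e\bigl(Q_{r(e)}+Q_{r(e)'}\bigr)s_e^\ast=s_ep_{r(e)}s_e^\ast=s_es_e^\ast$, so the sum collapses to $\sum_{s(e)=v}s_es_e^\ast$, which equals $Q_v$ by definition when $v\notin V$ and by $\mathcal{R}_V$ when $v\in V$. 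The universal property of $\Cstar(E)$ then delivers $\psi$.

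It remains to check that the two composites are the identity on generators. The relation $\psi\circ\phi=\mathrm{id}$ is quick: $\psi(P_v)=Q_v+Q_{v'}=p_v$ and, for $r(e)\notin V$, $\psi(S_e)=s_e\bigl(Q_{r(e)}+Q_{r(e)'}\bigr)=s_e$, the $V$-cases being trivial. For $\phi\circ\psi=\mathrm{id}$ one first shows $\phi(Q_v)=p_v$ and $\phi(Q_{v'})=\phi(g_v)=p_{v'}$, the latter by expanding $\phi(g_v)=(p_v+p_{v'})-\sum_{s(e)=v}S_eS_e^\ast$ and cancelling $p_v$ using the Cuntz--Krieger relation at $v$ in $\Cstar(E)$; then $\phi(T_e)=(s_e+s_{e'})p_v=s_e$ and $\phi(T_{e'})=(s_e+s_{e'})p_{v'}=s_{e'}$ for $r(e)\notin V$. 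I expect the main obstacle to lie in the $\psi$ direction: one must keep careful track of which edges carry a primed copy and confirm that the decomposition $p_v=Q_v+Q_{v'}$ is matched against the splitting of the edges received at $v$ into their unprimed and primed copies. This bookkeeping is exactly what makes the primed vertices sinks and forces the Cuntz--Krieger relation at the unprimed vertices to reproduce $\mathcal{R}_V$, so it is the step where the construction of $E(\G)$ is really being used.
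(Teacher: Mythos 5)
Two preliminary remarks on the comparison: the paper itself offers no proof of this proposition (it is quoted with the citation to \cite{AASbook}, Theorem 1.5.18), so you are supplying an argument the paper omits; and your strategy --- exhibit a relative Cuntz--Krieger $(\G,V)$-family inside $\Cstar(E(\G))$ and a Cuntz--Krieger $E(\G)$-family inside $\Cstar(\G,V)$, invoke the two universal properties, and check both composites on generators --- is exactly the standard route, the $\Cstar$-analogue of the algebraic proof in \cite{AASbook}. Most of your verifications are correct: the cross terms $s_e^*s_{e'}$ and $s_es_{e'}^*$ vanish because $r_E(e)\neq r_E(e')$; the recombination $T_eT_e^*+T_{e'}T_{e'}^*=s_e\bigl(Q_{r(e)}+Q_{r(e)'}\bigr)s_e^*=s_es_e^*$ is the right computation; $\psi\circ\phi=\mathrm{id}$ holds unconditionally; and your $T_e=s_eQ_{r(e)}$ silently corrects an evident typo in the statement (for $r(e)=v\notin V$ the formula for $\psi(s_e)$ must be $s_e\sum_{f\in s^{-1}(v)}s_fs_f^*$, as the paper's own use of $\psi$ in Prop.~\ref{repcommute} confirms). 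One caveat: the identity $s_e^*s_f=\delta_{e,f}\,p_{r(e)}$ that you use ``throughout'' is not among the relations \eqref{eqsesepr}--\eqref{eqpv=} as the paper lists them, and it is not a consequence of them; it is part of Muhly--Tomforde's definition \cite{Tom}, and without it your $Q_v=\sum_{s(e)=v}s_es_e^*$ need not even be a projection. You should invoke it as a defining relation rather than as something ``forced''.

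The genuine gap is in the direction $\phi\circ\psi=\mathrm{id}$, precisely at the step where you ``cancel $p_v$ using the Cuntz--Krieger relation at $v$ in $\Cstar(E)$'': that relation is available only when $v$ emits an edge of $E(\G)$, equivalently an edge of $\G$. If some $v\notin V$ is a sink of $\G$, then with Definition~\ref{extendedgraph} (which adjoins $v'$ for \emph{every} $v\notin V$) both $v$ and $v'$ are sinks of $E(\G)$, your $Q_v$ is the empty sum $0$, and $\phi(\psi(p_v))=0\neq p_v$; the composite genuinely fails, and in fact no isomorphism can exist: for $\G$ a single vertex with no edges and $V=\emptyset$ one has $\Cstar(\G,V)\cong\mathbb{C}$ while $\Cstar(E(\G))\cong\mathbb{C}^2$. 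The discrepancy lies in the paper's definition of the extended graph: the Cohn graph of \cite{AASbook} adjoins $v'$ only for \emph{regular} (non-sink) vertices outside $V$, and with that definition your argument goes through verbatim. So your proof is correct exactly under the hypothesis that every vertex outside $V$ emits at least one edge --- which holds in all of the paper's applications, since Markov transition matrices have no zero rows --- but as written the unrestricted appeal to the Cuntz--Krieger relation at $v\notin V$ is a gap: you should either impose that hypothesis explicitly or flag that Definition~\ref{extendedgraph} must be restricted to non-sinks for the proposition to be true as stated.
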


In our work, we are interested in graph algebras in connection with Cuntz-Krieger algebras. More precisely, for any
$n\times n$ matrix $A=(a_{ij})$ with entries in $\{0,1\}$, we construct a
directed graph $\G_A=(\G_A^1,\G_A^0, r,s)$ with
\begin{equation}
\G_A^0=\{1,...,n\},\ \G_A^1=\{e_{ij}: i,j\in\G_A^0, a_{ij}= 1\}\ \hbox{with}\ s(e_{ij})=i,\ r(e_{ij})=j.
\label{eqGA}
\end{equation}
The Cuntz-Krieger algebra $\mathcal{O}_A$ is then isomorphic to the graph \C-algebra $\Cstar(\G_A)$, see \cite[Prop.\ 4.1]{Rae2}.

\subsection{Representations of relative graph algebras arising from interval maps}
\label{subrepsescape}

We recall next how to obtain representations of a relative graph algebra associated with $\G_{A_\g}$, where $g\in M(I) $ is such that $E_{\g}\neq \emptyset$.

Let $H_{x}=\ell^2(R_\g(x))$, with $x\in E_{\g}$, be the Hilbert space with canonical base
\begin{equation*}
\left\{ \left\vert z\right\rangle: \g^{k}\left(z\right)= e\left(x\right)\ \hbox{for some}\ k\in \mathbb{N}_{0}\right\}.
\end{equation*}
 Note that there is a
special vector basis which is $\left\vert e\left(x\right)\right\rangle $.
The rank one projection on the 1-dimensional space $\mathbb{C}\left\vert z\right\rangle$
is denoted by $P_{z}$, or as usual in Dirac notation, $P_{z}=\left\vert
z\right\rangle \left\langle z\right\vert $.
We will write $S_{ij}$ instead of $S_{e_{ij}}$. For vectors $|y\rangle$ such that $y\in R_\g(x)$, we define
\begin{eqnarray}
  S_{ij} |y\rangle &=& \chi_{I_j}(y)\ | \g_i^{-1}(y)\rangle,\quad (\hbox{if}\ a_{ij}=1), \label{eqSij} \\
  P_i|y\rangle&=&  \chi_{I_i}(y)\ |y\rangle. \label{eqPi}
\end{eqnarray}
It is clear that $S_{ij}$ are nonzero partial isometries for $e_{ij}\in\G_{A_\g}^1$ (i.e.\ $a_{ij}=1$) with
$$S_{ij}^\ast |y\rangle =\chi_{I_j}(\g(y))\chi_{I_i}(y)\ |\g (y)\rangle.$$


The following is the main result in \cite{RMP20}.

\begin{theorem}(\cite[Thm.\ 3.1]{RMP20})\label{rmp20theorem}
Let $A_\g$ be the transition matrix of an expansive, aperiodic, Markov interval map $\g$ and $\widehat{A}_\g$ its escape transition matrix. Suppose that $E_\g\not=\emptyset$.
 Consider its oriented graph $\G_{A_\g}$ and fix $V\subseteq \G_{A_\g}^0$. Let $x\in E_\g$.

 \begin{enumerate}
 \item If $\widehat{a}_{i \iota(x)}=0$ for $i\in V$, then $s_{e_{ij}}\mapsto S_{ij}$ and $p_i\mapsto P_i$ defined in \eqref{eqSij} and \eqref{eqPi}  yield a representation, $\nu_x$, of $\Cstar(\G_{A_\g}, V)$ on the Hilbert space $H_x$.
 \item If $\widehat{a}_{i \iota(x)}=0$ for $i\in V$ and $\widehat{a}_{k\, \iota(x)}=1$ for $k\notin V$, then the representation in part (1) is faithful.

\end{enumerate}
\label{thmnice100}
\end{theorem}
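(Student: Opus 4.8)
The plan for part (1) is to invoke the universal property of $\Cstar(\G_{A_\g},V)$: it suffices to check that the operators $\{S_{ij}\}$ and $\{P_i\}$ of \eqref{eqSij}--\eqref{eqPi} are, respectively, partial isometries and mutually orthogonal projections on $H_x$ satisfying the relations $\mathcal{R}_V$. The mutual orthogonality of the $P_i$ and the identity $S_{ij}^\ast S_{ij}=P_j=P_{r(e_{ij})}$ (relation \eqref{eqsesepr}) follow from a direct computation with \eqref{eqSij}, once one observes that every basis vector other than the ``vacuum'' $|e(x)\rangle$ lies in the interior of a unique Markov interval $\mathring{I}_i$ (the inverse branches of the expansive monotone map carry the interior point $e(x)\in E_{\iota(x)}$ to interior points), so that $\chi_{I_i}\chi_{I_j}$ vanishes off the diagonal on the relevant vectors. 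The inequality $S_{ij}S_{ij}^\ast\le P_i=P_{s(e_{ij})}$ (relation \eqref{eqseseps}) is equally immediate, since $S_{ij}S_{ij}^\ast|y\rangle=\chi_{I_i}(y)\,\chi_{I_j}(\g(y))\,|y\rangle$.

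The crux of part (1) is the Cuntz--Krieger relation \eqref{eqpv=} for $i\in V$. Evaluating on a basis vector $|y\rangle$ with $y\in\mathring{I}_i$ gives $\sum_{j:a_{ij}=1}S_{ij}S_{ij}^\ast|y\rangle=\chi_{I_i}(y)\bigl(\sum_{j}\chi_{I_j}(\g(y))\bigr)|y\rangle$, which equals $P_i|y\rangle$ precisely when $\g(y)$ lands in some (necessarily unique) Markov interval, i.e.\ when $\g(y)\in\mathrm{dom}(\g)$. The only possible obstruction is a vector $|y\rangle$ with $y\in\mathring{I}_i$ whose image $\g(y)=e(x)\in E_{\iota(x)}$ escapes; by the recursive structure of $H_x$ such a $y$ exists exactly when $\mathring{I}_i\cap\g^{-1}(E_{\iota(x)})\ne\emptyset$, that is, when $\widehat{a}_{i\,\iota(x)}=1$ by Def.\ \ref{defhatA}. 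Hence the hypothesis $\widehat{a}_{i\,\iota(x)}=0$ for all $i\in V$ is exactly what forces \eqref{eqpv=}, and the universal property then delivers the representation $\nu_x$.

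For part (2) I would establish faithfulness through the gauge-invariant uniqueness theorem, passing to the extended graph $E(\G_{A_\g})$ via the isomorphism $\psi$ of Prop.\ \ref{isorelat}, so that $\nu_x\circ\psi$ becomes a representation of the graph algebra $\Cstar(E(\G_{A_\g}))$. First I would exhibit a gauge action: setting $U_z|y\rangle=z^{k(y)}|y\rangle$, where $k(y)$ is the (well-defined) level of $y$, so that $\g^{k(y)}(y)=e(x)$, a short calculation gives $U_zS_{ij}U_z^\ast=z\,S_{ij}$ and $U_zP_iU_z^\ast=P_i$, so $\nu_x$ intertwines the gauge actions. It then remains to check that every vertex projection of $E(\G_{A_\g})$ has nonzero image. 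For the unprimed vertices this is $P_i\ne0$, which follows from aperiodicity (P5): applying (P5) to a domain point lying over $e(x)$ produces, in each $\mathring{I}_i$, a point whose forward orbit reaches $e(x)$, hence a basis vector of $H_x$. The primed vertices $k'$ (for $k\notin V$) correspond under $\psi$ to the defect projections $P_k-\sum_{j}S_{kj}S_{kj}^\ast$, and the computation above shows this operator is nonzero exactly when some $y\in\mathring{I}_k$ escapes to $e(x)$, i.e.\ when $\widehat{a}_{k\,\iota(x)}=1$; this is the second hypothesis. With a gauge action in hand and all vertex projections of $E(\G_{A_\g})$ nonvanishing, the uniqueness theorem yields injectivity of $\nu_x\circ\psi$, hence of $\nu_x$.

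The main obstacle I anticipate is the careful bookkeeping that separates escaping from non-escaping orbit points---in particular handling the vacuum vector $|e(x)\rangle$, on which all generators vanish, and the boundary points of the $I_i$---together with the precise matching, through Prop.\ \ref{isorelat}, of the primed vertices of the extended graph with the escape transitions $\widehat{a}_{k\,\iota(x)}$ of \eqref{mmx}. Once these identifications are pinned down, both the verification of $\mathcal{R}_V$ and the hypotheses of the uniqueness theorem reduce to the transparent question of whether $e(x)$ admits a $\g$-preimage inside a given $\mathring{I}_i$.
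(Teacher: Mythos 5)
Your argument is essentially correct, but it takes a genuinely different route from the paper's. Note first that the paper only recalls this statement from \cite{RMP20}; its own derivation of it (in fact of a stronger statement) is the chain Def.~\ref{branchifs} $\to$ Prop.~\ref{propintmapsbsysts} $\to$ Thm.~\ref{repequivalence} $\to$ Thm.~\ref{injectiveMap} $\to$ Cor.~\ref{coelho}: the operators $S_{ij},P_i$ are recognized as the representation induced by a relative branching system on the orbit space $X$, so part~(1) becomes an instance of Prop.~\ref{repinducedbybranchingsystems}, and faithfulness is read off from the general criterion of Thm.~\ref{jacare11}, which passes to the extended graph and invokes the Cuntz--Krieger-type uniqueness theorem of \cite{GLR}, whose no-exit-cycle condition is then discharged by observing that points in the generalized orbit of an escape point are never periodic. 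You instead verify the relations $\mathcal{R}_V$ by hand and get faithfulness from the gauge-invariant uniqueness theorem, via the explicit gauge action $U_z|y\rangle=z^{k(y)}|y\rangle$ coming from the grading of $H_x$ by escape time. Your route is self-contained, bypasses the cycle analysis entirely, and, since your nonvanishing argument only needs each $I_i$ to contain a point whose orbit reaches $e(x)$, it would equally well prove the paper's sharper Cor.~\ref{coelho}, where aperiodicity and expansivity are weakened to irreducibility; what the paper's machinery buys instead is a necessary-and-sufficient criterion (Thm.~\ref{injectiveMap}) valid for arbitrary relative branching systems, not just those coming from interval maps.

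Two points you should still pin down, though neither is a genuine gap. (i) In part~(1), for a basis vector $y\in\mathring{I}_i$ with $g(y)\in\mathrm{dom}(g)$ you need the unique interval $I_j$ containing $g(y)$ to actually satisfy $a_{ij}=1$ (so that its term occurs in the sum), and for the obstruction you need that $\mathring{I}_i\cap g^{-1}(E_{\iota(x)})\neq\emptyset$ forces $e(x)\in g(\mathring{I}_i)$; both follow from the Markov property, since $g(I_i)$ is an interval with endpoints in $\Gamma$ and hence contains every partition or escape interval that it meets in an interior point. (ii) In part~(2), for $v\notin V$ the \emph{unprimed} vertex projection of $E(\G_{A_\g})$ corresponds under $\psi$ to $\sum_j S_{vj}S_{vj}^\ast$, not to $P_v$, so your aperiodicity argument must be run one step deeper: pick a $g$-preimage $w$ of $x$ (which exists since $\mathrm{im}(g)=I$) and then a point of $I_v$ mapped onto $w$ by some $g^{q}$, so that the resulting $y\in I_v$ has $g(y)\in\mathrm{dom}(g)$ and forward orbit reaching $e(x)$.
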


\section{Relative Branching Systems}\label{BSU}

In this section, we extend the algebraic notion of relative branching systems, which was introduced in \cite{CantoGoncalves}, to the analytical setting. In particular, for a relative graph, we define the associated relative branching system and describe faithfulness of the induced representation.

\begin{definition}\label{branchsystem}
Let $\mathcal{G}$ be an graph, $V\subseteq \mathcal G^0$, $(X,\mu)$ be a measure space and let $\{R_e,D_v\}_{e\in \mathcal{G}^1,v\in \mathcal{G}^0}$ be a family of measurable subsets of $X$. Suppose that
\begin{enumerate}
\item\label{R_e cap R_f =emptyset if e neq f} $R_e\cap R_f \stackrel{\mu-a.e.}{=}\emptyset$ if $e \neq f \in \mathcal{G}^1$;
\item $D_u \cap D_v \stackrel{\mu-a.e.}{=} \emptyset$ for all $u, v \in \mathcal{G}^0$;
\item $R_e\stackrel{\mu-a.e.}{\subseteq}D_{s(e)}$ for all $e\in \mathcal{G}^1$;
\item\label{D_v=cup_{e in s^{-1}(v)}R_e} $D_v\stackrel{\mu-a.e.}{=} \bigcup_{e \in s^{-1}(v)}R_e$ if $0 <\vert s^{-1}(v) \vert$ and $v\in V$; and
\item for each $e\in \mathcal{G}^1$, there exist two measurable maps $f_e:D_{r(e)}\rightarrow R_e$ and $f_e^{-1}:R_e \rightarrow D_{r(e)}$ such that $f_e\circ f_e^{-1}\stackrel{\mu-a.e.}{=} Id_{R_e}, f_e^{-1}\circ f_e\stackrel{\mu-a.e.}{=} Id_{D_{r(e)}}$, the measure $\mu \circ f_e$ in $D_{r(e)}$ is absolutely continuous with respect to $\mu$ in $D_{r(e)}$, and the measure $\mu \circ f_e^{-1}$ in $R_e$ is absolutely continuous with respect to $\mu$ in $R_e$.
\end{enumerate}
We call $\{R_e,D_v,f_e\}_{e \in \mathcal{G}^1,v \in \mathcal{G}^0}$ a \emph{relative $\mathcal{G}$-branching system} on $(X,\mu)$.
\end{definition}

\begin{remark}
If $V= \mathcal G^0$ above then we recover the usual definition of a branching system, as defined in \cite{GonRoy1}. Also, recall from \cite{GLR} that if $\alpha=\alpha_1\ldots  \alpha_n$ is a path in the graph $\mathcal G$ then $f_\alpha :=f_{\alpha_1}\circ \ldots \circ f_{\alpha_n}$ and  $f_\alpha^{-1} =f_{\alpha_n}^{-1}\circ \ldots \circ f_{\alpha_1}^{-1}$. Moreover, $f_\alpha ^k$ denotes the composition of $f_\alpha$ $k$ times. 
\end{remark}

 From now on, to simplify notation, we denote the Radon-Nikodym derivative $d(\mu \circ f_e)/d\mu$ by $\Phi_{f_e}$ and  the Radon-Nikodym derivative $d(\mu\circ f_e^{-1} )/d\mu$ by $\Phi_{f_e^{-1}}$.
Also, since the domain of $\Phi_{f_e^{-1}}$ and the domain of $ f_e^{-1}$ are $R_e$, we can also regard them as measurable maps on $X$ by simply extending then with value zero out of $R_e$, and so, for each $\phi\in \mathcal{L}^2(X,\mu)$, we can consider the function $\Phi_{f_e^{-1}}^{1/2}\cdot( \phi \circ f_e^{-1})$. Moreover, by extending $f_e$ and $\Phi_{f_e}$ by zero out of $D_{r(e)}$ we get the function $\Phi_{f_e}^{1/2}\cdot( \phi \circ f_e)$.

The procedure to obtain a representation of a relative graph C*-algebra from a given relative branching system follows the same outline as the usual graph algebra case (see \cite{GonRoy1} for example) and so we just describe it below (as a proposition).

\begin{proposition}
\label{repinducedbybranchingsystems}
Let $\mathcal{G}$ be a graph,  $V\subseteq \mathcal G^0$, and let $\{R_e,D_v,f_e\}_{e\in \mathcal{G}^1, v\in \mathcal{G}^0}$ be a relative $\mathcal{G}$-branching system on a measure space $(X,\mu)$. Then, there exists an unique representation $\pi:C^*(\mathcal{G},V) \to B(\mathcal{L}^2(X,\mu))$ such that 
$$\pi(s_e)(\phi)=\Phi_{f_e^{-1}}^{1/2}\cdot( \phi \circ f_e^{-1})\ \  \hbox{and}\  \ \pi(p_v)(\phi)=\chi_{D_v}\phi$$
for all $e \in \mathcal{G}^1, v \in \mathcal{G}^0$ and $\phi \in \mathcal{L}^2(X,\mu)$.
\end{proposition}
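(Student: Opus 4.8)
The plan is to verify that the proposed operators $\pi(s_e)$ and $\pi(p_v)$ are well-defined bounded operators on $\mathcal{L}^2(X,\mu)$ and then check that they satisfy the defining relations $\mathcal{R}_V$ of $\Cstar(\G,V)$, after which the universal property of the relative graph algebra immediately yields the existence and uniqueness of the representation $\pi$. First I would record the key change-of-variables identities that follow from condition (5) of Definition~\ref{branchsystem}: for $\phi,\psi\in \mathcal{L}^2(X,\mu)$ one has $\int_{R_e}\Phi_{f_e^{-1}}\,(\phi\circ f_e^{-1})\overline{(\psi\circ f_e^{-1})}\,d\mu=\int_{D_{r(e)}}\phi\overline{\psi}\,d\mu$, using that $\mu\circ f_e^{-1}\ll\mu$ with Radon--Nikodym derivative $\Phi_{f_e^{-1}}$ and that $f_e^{-1}$ is a $\mu$-a.e.\ bijection onto $D_{r(e)}$. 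This shows at once that each $\pi(s_e)$ is an isometry from $\chi_{D_{r(e)}}\mathcal{L}^2$ into $\chi_{R_e}\mathcal{L}^2$ (hence a partial isometry on all of $\mathcal{L}^2$) and that each $\pi(p_v)=\chi_{D_v}\cdot$ is an orthogonal projection, so all operators are bounded with norm at most one.

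Next I would compute the adjoint. A direct pairing $\langle \pi(s_e)\phi,\psi\rangle=\langle\phi,\pi(s_e)^*\psi\rangle$, combined with the change-of-variables formula, gives $\pi(s_e)^*(\psi)=\Phi_{f_e}^{1/2}\cdot(\psi\circ f_e)$ (extended by zero off $D_{r(e)}$), where the relation between $\Phi_{f_e}$ and $\Phi_{f_e^{-1}}$ on overlapping supports follows from the chain rule for Radon--Nikodym derivatives applied to $f_e\circ f_e^{-1}=\mathrm{Id}_{R_e}$. With this explicit formula the Cuntz--Krieger relations become routine verifications: relation~\eqref{eqsesepr}, $\pi(s_e)^*\pi(s_e)=\pi(p_{r(e)})$, follows because $\pi(s_e)^*\pi(s_e)$ acts as multiplication by $\chi_{D_{r(e)}}$ (the isometry property restricted to the initial space); relation~\eqref{eqseseps}, $\pi(s_e)\pi(s_e)^*\le\pi(p_{s(e)})$, follows since $\pi(s_e)\pi(s_e)^*$ is the projection onto $\chi_{R_e}\mathcal{L}^2$ and condition (3) gives $R_e\subseteq D_{s(e)}$ a.e.; mutual orthogonality of the $\pi(p_v)$ uses condition (2); and the summation relation~\eqref{eqpv=} for $v\in V$ with $s^{-1}(v)\ne\emptyset$ uses condition (4) together with the disjointness from condition (1) to write $\chi_{D_v}=\sum_{e\in s^{-1}(v)}\chi_{R_e}$ a.e.

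The main obstacle I anticipate is the careful handling of the measure-theoretic subtleties hidden in the $\mu$-a.e.\ qualifiers and the null-set manipulations, particularly making sure the Radon--Nikodym derivatives $\Phi_{f_e}$ and $\Phi_{f_e^{-1}}$ interact correctly when verifying the adjoint formula, since one must justify that $\Phi_{f_e^{-1}}\cdot(\Phi_{f_e}\circ f_e^{-1})=1$ a.e.\ on $R_e$. This is precisely where the absolute-continuity hypotheses in condition (5) are indispensable. Once the adjoint is correctly identified, the relations are formal and the existence of $\pi$ is guaranteed by the universal property of $\Cstar(\G,V)$; uniqueness is automatic since $\pi$ is prescribed on the generating set $\{s_e,p_v\}$. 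I would remark that this argument parallels the graph-algebra case treated in \cite{GonRoy1}, the only genuinely new ingredient being that the summation relation~\eqref{eqpv=} is imposed only at vertices in $V$, which is exactly accommodated by condition (4) of Definition~\ref{branchsystem}.
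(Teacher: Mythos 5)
Your proof is correct and takes essentially the same approach as the paper: indeed, the paper omits the argument entirely, stating only that it "follows the same outline as the usual graph algebra case (see \cite{GonRoy1})," and your verification—change-of-variables isometry, the adjoint formula $\pi(s_e)^*(\psi)=\Phi_{f_e}^{1/2}\cdot(\psi\circ f_e)$, the relations \eqref{eqsesepr}--\eqref{eqpv=} from conditions (1)--(5), then the universal property—is precisely that standard argument, correctly adapted so that the summation relation is imposed only at vertices of $V$ via condition (4).
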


Our main goal in this section is to characterize when the representations above are faithful. For this, given a relative branching system of a graph $(\G,V)$, we first construct a branching system associated with the extended graph $E(\G)$, such that the representations arising from both branching systems coincide under the isomorphism (from Prop.\ \ref{isorelat}) between the relative graph algebra and the extended graph algebra. More precisely, we have the following.

\begin{definition}\label{extendedbs}
Let $\mathcal{G}$ be a graph,  $V\subseteq \mathcal G^0$, and let $\{R_e,D_v,f_e\}_{e\in \mathcal{G}^1, v\in \mathcal{G}^0}$ be a relative $\mathcal{G}$-branching system on a measure space $(X,\mu)$. Define the triple ${Q_e,B_v, g_e}$ associated with $E(\G)$ as follows.

For $v\in \G^0$, if $v\in V$ then let $B_v = D_v$. If $v\notin V$ then let $B_v = \bigcup_{e\in s^{-1}(v)} R_e$ and $B_{v'}= D_v \setminus \bigcup_{e\in s^{-1}(v)} R_e $.

For $e\in \G^1$, if $r(e)=v \in V$ then let $Q_e=R_e$. If $r(e)=v\notin V$ then define $Q_e=f_e(\bigcup_{f\in s^{-1}(v)} R_f)$ and $Q_{e'}=f_e(D_v \setminus \bigcup_{f\in s^{-1}(v)} R_f)$.  

Finally, for $e\in E(\G)^1$, define $g_e=f_e$ and $g_{e'}=f_e$.
\end{definition}

\begin{proposition}\label{reprelativebs} Let $\mathcal{G}$ be a graph,  $V\subseteq \mathcal G^0$, and let $\{R_e,D_v,f_e\}_{e\in \mathcal{G}^1, v\in \mathcal{G}^0}$ be a relative $\mathcal{G}$-branching system on a measure space $(X,\mu)$. Then, the triple $\{Q_e,B_v, g_e\}$ defined above is a branching systems associated with with $E(\G)$.
\end{proposition}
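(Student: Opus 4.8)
The plan is to verify that the triple $\{Q_e,B_v,g_e\}$ satisfies the five conditions of Definition \ref{branchsystem} for the graph $E(\G)$ with relative set equal to the full vertex set $E^0$; by the remark following Definition \ref{branchsystem}, this is exactly what it means to be a branching system associated with $E(\G)$. Before checking the axioms one by one, I would record three structural observations that drive the whole argument. First, each $f_e\colon D_{r(e)}\to R_e$ is an a.e.\ bijection with measurable inverse $f_e^{-1}$, so images of measurable subsets of $D_{r(e)}$ are again measurable (being a.e.\ equal to preimages under $f_e^{-1}$) and $f_e$ restricted to any measurable subset is again an a.e.\ bijection onto its image. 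Second, whenever $r(e)\notin V$ the sets $Q_e$ and $Q_{e'}$ are the $f_e$-images of the complementary pieces $\bigcup_{g\in s^{-1}(r(e))}R_g$ and $D_{r(e)}\setminus\bigcup_{g\in s^{-1}(r(e))}R_g$ of $D_{r(e)}$; hence both lie inside $R_e$, they are disjoint a.e., and $Q_e\cup Q_{e'}=f_e(D_{r(e)})=R_e$ a.e. Third, from Definition \ref{extendedgraph} every primed vertex $v'$ is a sink of $E(\G)$ (all edges, primed or not, have unprimed source), so condition (4) only needs to be checked at the unprimed vertices.

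Conditions (1)--(3) are then a bookkeeping exercise. For (1), all range sets built from a fixed original edge $e$ sit inside $R_e$, and the $R_e$ are mutually disjoint by condition (1) of the relative branching system; within a single $R_e$ the pieces $Q_e,Q_{e'}$ are disjoint by the second observation, giving a.e.\ disjointness across all of $E^1$. For (2), every $B_v$ with $v\in\G^0$ and every $B_{v'}$ is contained in $D_v$, so disjointness for distinct underlying vertices comes from condition (2) of the relative system, while for a fixed $v\notin V$ the pair $B_v=\bigcup_{e\in s^{-1}(v)}R_e$ and $B_{v'}=D_v\setminus\bigcup_{e\in s^{-1}(v)}R_e$ is disjoint by construction. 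For (3), using $s_{E(\G)}(e)=s_{E(\G)}(e')=s(e)$ together with $Q_e,Q_{e'}\subseteq R_e\subseteq D_{s(e)}$, I would treat the two cases $s(e)\in V$ (where $B_{s(e)}=D_{s(e)}$) and $s(e)\notin V$ (where $R_e$ is one of the sets whose union is $B_{s(e)}$).

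The conceptual heart is condition (4), the covering equality $B_v=\bigcup_{e\in s_{E(\G)}^{-1}(v)}Q_e$ (a.e.) at each unprimed vertex $v$ with outgoing edges. The edges of $E(\G)$ with source $v$ are the $e\in\G^1$ with $s(e)=v$ together with the primed copies $e'$ for those $e$ having $r(e)\notin V$. For an edge $e$ with $r(e)\in V$ the contribution is $Q_e=R_e$, while for an edge $e$ with $r(e)\notin V$ the contributions $Q_e$ and $Q_{e'}$ together give $R_e$ a.e.\ by the second observation; thus the total range coming out of $v$ is $\bigcup_{e\in s^{-1}(v)}R_e$ a.e. If $v\notin V$ this equals $B_v$ by definition, and if $v\in V$ it equals $D_v=B_v$ by condition (4) of the relative branching system, which applies precisely because $v\in V$ and $s^{-1}(v)\neq\emptyset$. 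This is where the hypotheses are used most delicately: condition (4) of the relative system may be invoked only at vertices of $V$, and it is exactly the artificial splitting of $R_e$ into $Q_e\cup Q_{e'}$ that lets the extended system reconstitute $R_e$ at the outgoing vertex.

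Finally, condition (5) is inherited from the relative system. For each edge of $E(\G)$ the assigned map is a restriction of some $f_e$: for $e$ with $r(e)\in V$ it is $f_e\colon D_{r(e)}\to R_e$ itself; for $e$ with $r(e)\notin V$ it is $f_e$ restricted to $B_{r(e)}=\bigcup_{g\in s^{-1}(r(e))}R_g$, with image $Q_e$; and for $e'$ it is $f_e$ restricted to $B_{r(e)'}=D_{r(e)}\setminus\bigcup_{g\in s^{-1}(r(e))}R_g$, with image $Q_{e'}$. In each case the domain is the $B$-set of the range vertex, the inverse is the corresponding restriction of $f_e^{-1}$, and the Radon--Nikodym absolute-continuity conditions pass to restrictions, so condition (5) holds. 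The only genuine obstacle is making the a.e.\ identities in the second and third observations precise, in particular that $f_e$ restricted to a subset remains an a.e.\ bijection onto its image and that $f_e(D_{r(e)})=R_e$ a.e.; once these are in place, the remainder is a routine case analysis over the vertex and edge types of $E(\G)$.
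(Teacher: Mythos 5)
Your proposal is correct, and it carries out exactly the verification that the paper itself omits (its ``proof'' of Prop.~\ref{reprelativebs} is just ``straightforward, left to the reader''): a direct check of the five axioms of Definition~\ref{branchsystem} with $V=E(\G)^0$. The points you isolate are precisely the ones that make it work --- primed vertices are sinks so condition~(4) is vacuous there, $Q_e\cup Q_{e'}\stackrel{\mu\text{-a.e.}}{=}R_e$ reconstitutes the ranges at unprimed vertices, and relative condition~(4) is invoked only at $v\in V$ while $B_v$ is defined to be $\bigcup_{e\in s^{-1}(v)}R_e$ when $v\notin V$ --- so nothing is missing.
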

\begin{proof}
The proof is straightforward and we leave it to the reader.
\end{proof}

\begin{proposition}\label{repcommute} Let $\mathcal{G}$ be a graph,  $V\subseteq \mathcal G^0$, $\{R_e,D_v,f_e\}_{e\in \mathcal{G}^1, v\in \mathcal{G}^0}$ be a relative $\mathcal{G}$-branching system and $\{Q_e,B_v, g_e\}$ as above. Moreover, let $\pi:C^*(\G,V) \to B(L^2(X,\mu))$ be the representation induced from $\{R_e,D_v,f_e\}_{e\in \mathcal{G}^1, v\in \mathcal{G}^0}$, $\rho:C^*(E(\G)) \to B(L^2(X,\mu))$ be the representation induced from $\{Q_e,B_v, g_e\}$, and $\psi: C^*(E(\G)) \rightarrow C^*(\G,V)$ be the isomorphism of Prop.\ \ref{isorelat}. Then, $\rho = \pi \circ \psi$.
\end{proposition}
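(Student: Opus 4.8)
The plan is to verify the identity $\rho = \pi \circ \psi$ on the generators of $C^*(E(\G))$, namely the projections $p_v, p_{v'}$ and the partial isometries $s_e, s_{e'}$, and then invoke the universal property: since both $\rho$ and $\pi\circ\psi$ are $*$-homomorphisms out of $C^*(E(\G))$ that agree on a generating set, they must coincide. Thus the real content is a finite family of pointwise computations on $L^2(X,\mu)$, organized according to the case split $v\in V$ versus $v\notin V$ and $r(e)\in V$ versus $r(e)\notin V$ that already governs the definitions of $\psi$ (Prop.\ \ref{isorelat}) and of the extended branching system $\{Q_e,B_v,g_e\}$ (Def.\ \ref{extendedbs}).

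First I would treat the projections. For $v\in V$ we have $\psi(p_v)=p_v$, so I must check $\rho(p_v)=\pi(p_v)$, i.e.\ multiplication by $\chi_{B_v}$ equals multiplication by $\chi_{D_v}$; this is immediate since $B_v=D_v$ by definition. For $v\notin V$, $\psi(p_v)=\sum_{e\in s^{-1}(v)} s_e s_e^*$, and applying $\pi$ and using Prop.\ \ref{repinducedbybranchingsystems} together with the Radon--Nikodym change-of-variables identity $\pi(s_e)\pi(s_e)^*(\phi)=\chi_{R_e}\phi$ gives multiplication by $\sum_{e\in s^{-1}(v)}\chi_{R_e}=\chi_{B_v}$ (disjointness from axiom \eqref{R_e cap R_f =emptyset if e neq f}), which is exactly $\rho(p_v)$. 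The ghost projection $p_{v'}$ is handled the same way: $\psi(p_{v'})=p_v-\sum_{e\in s^{-1}(v)}s_es_e^*$ maps under $\pi$ to multiplication by $\chi_{D_v}-\chi_{B_v}=\chi_{B_{v'}}=\rho(p_{v'})$.

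Next I would treat the edges. When $r(e)\in V$ we have $\psi(s_e)=s_e$ and $Q_e=R_e$, $g_e=f_e$, so $\rho(s_e)=\pi(s_e)$ verbatim. The substantive case is $r(e)=v\notin V$, where $\psi(s_e)=s_e\big(\sum_{f\in s^{-1}(v)}s_fs_f^*\big)$ and $\psi(s_{e'})=s_e\big(p_v-\sum_{f\in s^{-1}(v)}s_fs_f^*\big)$. Here I would compute, for $\phi\in L^2(X,\mu)$, that $\pi(s_e)\big(\chi_{B_v}\phi\big)=\Phi_{f_e^{-1}}^{1/2}\cdot\big((\chi_{B_v}\phi)\circ f_e^{-1}\big)$, and compare this against $\rho(s_e)(\phi)=\Phi_{g_e^{-1}}^{1/2}\cdot(\phi\circ g_e^{-1})$ restricted to $Q_e$. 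The point is that the support condition $Q_e=f_e\big(\bigcup_{f\in s^{-1}(v)}R_f\big)=f_e(B_v)$ means the cutdown by $\chi_{B_v}$ before applying $f_e^{-1}$ on the $\pi$ side precisely matches the range restriction to $Q_e$ on the $\rho$ side, and since $g_e=f_e$ the Radon--Nikodym factors agree as well. The analogous matching $Q_{e'}=f_e(D_v\setminus B_v)=f_e(B_{v'})$ handles $s_{e'}$.

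The main obstacle I anticipate is the careful bookkeeping of supports and Radon--Nikodym derivatives in the $r(e)\notin V$ case: one must make sure that the zero-extensions of $f_e^{-1}$ and $\Phi_{f_e^{-1}}$ off $R_e$ interact correctly with the further restriction to the subsets $Q_e, Q_{e'}\subseteq R_e$, and that the chain-rule behaviour of the derivatives under the identifications $g_e=g_{e'}=f_e$ is consistent with the splitting of $R_e$ into $Q_e\sqcup Q_{e'}$. All of this holds $\mu$-almost everywhere by the branching-system axioms, but it is exactly the place where a routine-looking computation can hide a sign or support error, so I would write out that one case explicitly and let the remaining cases follow by symmetry.
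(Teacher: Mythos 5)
Your proposal is correct and follows essentially the same route as the paper's proof: verify $\rho=\pi\circ\psi$ on the generators of $C^*(E(\G))$, splitting into the cases $v\in V$ versus $v\notin V$ and $r(e)\in V$ versus $r(e)\notin V$, and using the identities $B_v=\bigcup_{e\in s^{-1}(v)}R_e$, $B_{v'}=D_v\setminus\bigcup_{e\in s^{-1}(v)}R_e$, $Q_e=f_e\bigl(\bigcup_{f\in s^{-1}(v)}R_f\bigr)$ together with $g_e=f_e$ so that the cutdown by $\chi_{B_v}$ on the $\pi$ side matches the range restriction to $Q_e$ on the $\rho$ side. If anything, your explicit attention to the Radon--Nikodym factors is slightly more careful than the paper, whose displayed edge computation suppresses them.
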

\begin{proof}
Notice that to show that $\rho = \pi \circ \psi$ is enough to check the equality on the generators of the algebra $C^*(E(\G))$.

Let $v\in \G^0$. It is clear that $\rho(p_v) = \pi \circ \psi (p_v)$. Suppose that $v\notin V$. Let $\phi \in \mathcal{L}^2(X,\mu)$. Then,
\begin{eqnarray*}
  \pi \circ \psi (p_{v'})(\phi)&=& \pi (p_v-\displaystyle \sum_{e:s(e)=v} s_e s_e^*)(\phi)=\chi_{D_v}\cdot \phi-\sum_{e:s(e)=v}\chi_{R_e} \cdot \phi\\
  &=&\chi_{B_{v'}} \cdot \phi=\rho(p_{v'})(\phi).
\end{eqnarray*}

Next, we show that the desired equality holds for the generators associated to edges. Let $e\in \G^1$. If $r(e)\in V$, it is clear that $\rho(s_e) = \pi \circ \psi (s_e)$. Suppose that $v=r(e)\notin V$. Let $\phi \in \mathcal{L}^2(X,\mu)$. Then,

\begin{eqnarray*}
\pi \circ \psi (s_{e})(\phi)&=& \pi (s_e \displaystyle \sum_{f\in s^{-1}(v)}s_fs_f^*)(\phi)=\pi (s_e)\left( \sum_{f\in s^{-1}(v)}\pi(s_fs_f^*)(\phi)\right) \\
&=& \pi (s_e)\left( \chi_{_{\bigcup_{f\in s^{-1}(v)} R_f}} \cdot \phi \right)=  \chi_{_{\bigcup_{f\in s^{-1}(v)} R_f}}\circ f_e^{-1} \cdot \phi\circ f_e^{-1}\\
&=&\chi_{Q_e} \cdot  \phi\circ g_e^{-1}=\rho(s_e)(\phi).
\end{eqnarray*}

The proof that $ \pi \circ \psi (s_{e'})=\rho(s_{e'})$ is analogous. We conclude that $\rho = \pi \circ \psi$ as desired.
\end{proof}

We can now extend \cite[Thm.\ 3.1]{GLR} to representations of relative graph algebras arising from branching systems and obtain an analytical version of \cite[Thm.\  5.6]{CantoGoncalves}. 

\begin{theorem}\label{jacare11}
Let $\mathcal{G}$ be a graph,  $V\subseteq \mathcal G^0$, $\{R_e,D_v,f_e\}_{e\in \mathcal{G}^1, v\in \mathcal{G}^0}$ be a relative $\mathcal{G}$-branching system on a measure space $(X,\mu)$, and let $\pi:C^*(\G,V) \to B(L^2(X,\mu))$ be the representation induced from the relative $\G$-branching system. Then,  $\pi$ is faithful if, and only if, the following conditions are satisfied.

\begin{enumerate}
    \item for each $v\in \G^0$, $D_v$ is non-empty;
    \item $D_v \neq \bigcup_{e\in s^{-1}(v)} R_e$ for all $v\notin V$; and
    \item for each $v \in V$ such that $v$ is a base point of a cycle which has no exit, and for each finite family $\{\alpha^i\}_{i=1}^{n}$ of cycles having $v$ as the base point, there exists a measurable subset $F$ of $D_v$ with $\mu(F) \neq 0$, such that $f_{\alpha^i}(F) \cap F\stackrel{\mu-a.e.}{=}\emptyset$ for all $i$.
\end{enumerate}
\begin{proof}

Let $\rho:C^*(E(\G)) \to B(L^2(X,\mu))$ be the representation induced from the branching system associated with $E(\G)$, as in Def.\ \ref{extendedbs}, and let $\psi: C^*(E(\G)) \rightarrow C^*(\G,V)$ be the isomorphism of Prop.\ \ref{isorelat}. 

By Prop.\ \ref{repcommute}, we have that $\rho = \pi \circ \psi$. Since $\psi$ is an isomorphism, we have that $\pi$ is 1-1 iff $\rho$ is 1-1.

To show that $\rho$ is 1-1 we verify that the branching system $\{Q_e,B_v, g_e\}$ associated with $E(\G)$ satisfies the hypothesis of \cite[Thm.\ 3.1]{GLR}.

Let $v\in \G^0$. If $v\in V$, then $\mu(B_v)\neq 0 $ by Condition (1). Suppose that $v\notin V$. Then, $\mu(B_{v'})\neq 0 $ by Condition (2). Also, $\mu(B_{v})\neq 0 $, since for every edge $e$ we have that $\mu(R_{e})\neq 0 $ (since there is the map $f_e$ from $D_{r(e)}$ to $R_e$ from the definition of a branching system and $\mu(D_v)\neq 0 $ for all $v$).
The final hypothesis in \cite[Thm.\ 3.1]{GLR} that needs to be verified is the same as our Condition (3), but for all vertices of the extended graph. Suppose that $v$ is a vertex such that $v\notin V$. Then, $v'$ is a sink and $v$ is not the base point of a cycle that has no exit. Therefore, the hypothesis in \cite[Thm.\ 3.1]{GLR} is trivially satisfied for such vertices. If $v\in V$ then, as we said, the hypothesis in \cite[Thm.\ 3.1]{GLR} is the same as our Condition (3). We conclude that $\rho$ is 1-1 as desired.

The converse of the theorem follows analogously to what is done in \cite[Thm.\ 5.6]{CantoGoncalves}.

\end{proof}

\end{theorem}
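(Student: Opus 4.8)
The plan is to reduce faithfulness of $\pi$ to the already-established faithfulness criterion for ordinary branching systems, \cite[Thm.\ 3.1]{GLR}, by passing through the extended graph $E(\G)$. First I would invoke Prop.\ \ref{reprelativebs} to treat $\{Q_e,B_v,g_e\}$ as a genuine branching system for $E(\G)$, and Prop.\ \ref{repcommute} to obtain the factorization $\rho=\pi\circ\psi$, where $\rho$ is the representation of $\Cstar(E(\G))$ induced by $\{Q_e,B_v,g_e\}$ and $\psi$ is the isomorphism of Prop.\ \ref{isorelat}. Since $\psi$ is an isomorphism, $\pi$ is injective if and only if $\rho$ is injective, so it suffices to characterize faithfulness of $\rho$ on the $E(\G)$ side and then translate the conditions back to the original relative branching system.

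For the forward direction, assuming (1)--(3), I would verify the two hypotheses of \cite[Thm.\ 3.1]{GLR} for $\{Q_e,B_v,g_e\}$. The nonemptiness hypothesis requires $\mu(B_w)\neq 0$ for every $w\in E^0$. For $w=v\in V$ this is $B_v=D_v$, handled by (1); for the primed vertex $w=v'$ with $v\notin V$ one has $B_{v'}=D_v\setminus\bigcup_{e\in s^{-1}(v)}R_e$, whose positivity is exactly (2); and for $w=v\notin V$ one has $B_v=\bigcup_{e\in s^{-1}(v)}R_e$, which is positive because each $R_e$ is the $f_e$-image of $D_{r(e)}$ under a map whose associated measures are mutually absolutely continuous with $\mu$, so $\mu(R_e)>0$ by (1). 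The cycle hypothesis of \cite[Thm.\ 3.1]{GLR} must then be checked at every base point of a cycle without exit in $E(\G)$; I would show that these are precisely the vertices $v\in V$, for which the hypothesis becomes Condition (3) verbatim, using $g_\alpha=f_\alpha$ for cycles $\alpha$ in $E(\G)$ (which necessarily consist of original edges, since any ghost edge $e'$ has range a sink $v'$).

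The main obstacle is the structural analysis of cycles in $E(\G)$. Each primed vertex $v'$ is a sink, so no cycle passes through it and the GLR cycle hypothesis is vacuous there; the delicate point is that no unprimed vertex $v\notin V$ can be the base point of a cycle without exit. The argument I would give is that any cycle returning to such a $v$ must close along an original edge $e$ with $r(e)=v\notin V$; but by construction of $E(\G)$ the extra edge $e'$ issues from $s(e)$, supplying an exit and contradicting the no-exit hypothesis. This is exactly what collapses the GLR cycle condition over $E^0$ to Condition (3) over $V$, and I expect it to require the most care to state cleanly.

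Finally, for the converse I would argue the contrapositive as in \cite[Thm.\ 5.6]{CantoGoncalves}: if any of (1)--(3) fails, one produces an explicit nonzero kernel element on the $E(\G)$ side (a projection supported on an empty $B_w$, or an element built from a cycle violating the separation condition), and transports it through $\psi^{-1}$ to a nonzero element of $\ker\pi$. The only labor here is the bookkeeping that matches each failed condition with the corresponding failure of a hypothesis of \cite[Thm.\ 3.1]{GLR}.
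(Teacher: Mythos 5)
Your proposal is correct and takes essentially the same route as the paper's own proof: the identical reduction to the extended graph via Prop.\ \ref{reprelativebs}, Prop.\ \ref{repcommute} and the isomorphism of Prop.\ \ref{isorelat}, followed by verification of the hypotheses of \cite[Thm.\ 3.1]{GLR} for $\{Q_e,B_v,g_e\}$ and the same deferral of the converse to the argument of \cite[Thm.\ 5.6]{CantoGoncalves}. The only difference is cosmetic: you spell out the ghost-edge argument showing that no vertex outside $V$ can base a no-exit cycle in $E(\G)$, a point the paper asserts without justification.
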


\section{Interval maps as branching systems and the corresponding representations}
\label{sectintmapsBS}

In this section, we show how, from an interval map, to build a branching system such that the induced representations of the associated relative graph algebra coincide.

Suppose that $\g\in M(I)$ is such that $E_{\g}\neq \emptyset$, and let $\mathcal G$ be the graph associated with its transition matrix. Let $x\in E_\g$, and consider the associated relative graph algebra $C^*(\mathcal G,V)$, with $V\subseteq \mathcal G^0$ such that $ \hat{a}_{i\iota(x)} =0$ for $i\in V$. Next, we build the desired relative branching system $(R_{ij}, D_{v_j},f_{ij})$ associated with $\G$.

Define $X= \{z: \g^{k}\left(z\right)= e\left(x\right)\ \hbox{for some}\ k\in \mathbb{N}_{0}\}\setminus \Gamma,$ equipped with the counting measure. 
Notice that, with the current hypothesis on $\g$, we have that $\g(\Gamma)\subseteq \Gamma$ and hence 
\begin{equation}\label{eqXmu}
X= \{z: \g^{k}\left(z\right)= e\left(x\right)\ \hbox{for some}\ k\in \mathbb{N}_{0}\}.
\end{equation}

To simplify notation, from now on denote by $\g_i$ the restriction of $\g$ to $I_i$.

We define a relative branching system on $X$ in the following way.

\begin{definition}\label{branchifs}
For each vertex $j\in \G^0$ and for each edge $ij\in \G^1$ (see \eqref{eqGA}) let \[D_{j}= I_j \cap X \text{ and } R_{ij} = \g_{i}^{-1}( I_j \cap X),\]
and define $f_{ij}:D_{r(ij)}=D_{j}\rightarrow R_{ij}$ as $\g_i^{-1}|_{I_j}$.
\end{definition}

\begin{proposition}\label{propintmapsbsysts} Suppose that $ \hat{a}_{i\iota(x)} =0$ for $i\in V$. Then,
$(R_{ij},D_{j}, f_{ij})$ as above is a $(\G,V)$-relative branching system.
\end{proposition}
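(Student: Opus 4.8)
The plan is to verify directly, one at a time, the five axioms of Definition~\ref{branchsystem} for the family $\{R_{ij},D_j,f_{ij}\}$ of Definition~\ref{branchifs}. Two reductions should be recorded first. Since $\mu$ is the counting measure on $X$, the only $\mu$-null set is $\emptyset$; hence every ``$\mu$-a.e.'' clause becomes an exact set-theoretic identity, and the absolute-continuity requirements in axiom~(5) hold automatically (the only null set being $\emptyset$). Second, because $e(x)$ lies in an open escape interval while $\g(\Gamma)\subseteq\Gamma$, no point of $X$ can lie in $\Gamma$ (an iterate of such a point would again lie in $\Gamma$, never reaching $e(x)$); this is exactly what justifies \eqref{eqXmu}, and it guarantees that every element of $X$ sits in the \emph{interior} of the intervals of $C_\g$, removing all endpoint ambiguity in applying $\g_i^{-1}$.

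With these in hand, axioms~(1)--(3) are routine. For (2), distinct intervals $I_u,I_v$ meet only in $\Gamma$, and $X\cap\Gamma=\emptyset$, so $D_u\cap D_v=I_u\cap I_v\cap X=\emptyset$. For (1), when $i\neq k$ the sets $R_{ij}\subseteq I_i$ and $R_{kl}\subseteq I_k$ are separated for the same reason, while when $i=k$ and $j\neq l$ the injectivity of $\g_i$ on $\mathring I_i$ gives $R_{ij}\cap R_{il}=\g_i^{-1}(D_j\cap D_l)=\emptyset$ by (2). Axiom~(3) is immediate once one notes $R_{ij}\subseteq X$ (if $\g(z)\in X$ then $z\in X$), since then $R_{ij}=\g_i^{-1}(D_j)\subseteq I_i\cap X=D_i=D_{s(ij)}$.

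The crux is axiom~(4): for $v\in V$ with $s^{-1}(v)\neq\emptyset$ one must show $D_v=\bigcup_{j:\,a_{vj}=1}R_{vj}$. The inclusion $\supseteq$ is (3). For $\subseteq$, take $z\in D_v=I_v\cap X$. As $e(x)\notin\mathrm{dom}(\g)$ while $z\in I_v\subseteq\mathrm{dom}(\g)$, we have $z\neq e(x)$, so $\g(z)\in X$; and since $e(x)$ is the only point of $X$ outside $\mathrm{dom}(\g)$, either $\g(z)=e(x)$ or $\g(z)\in\mathrm{dom}(\g)$. The first alternative would give $z\in\mathring I_v\cap\g^{-1}(E_{\iota(x)})\neq\emptyset$, that is $\widehat a_{v\,\iota(x)}=1$ by Definition~\ref{defhatA}, contradicting the standing hypothesis $\widehat a_{i\,\iota(x)}=0$ for $i\in V$. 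Hence $\g(z)\in\mathrm{dom}(\g)$, and being in $X$ it avoids $\Gamma$, so $\g(z)\in\mathring I_j$ for a (unique) $j$; the Markov property~(P2) together with $\g(\Gamma)\subseteq\Gamma$ then forces $\mathring I_j\subseteq\g(\mathring I_v)$, i.e.\ $a_{vj}=1$ by \eqref{mx}, whence $z\in\g_v^{-1}(D_j)=R_{vj}$. This step is the main obstacle, and the way the hypothesis $\widehat a_{i\,\iota(x)}=0$ excludes the escape alternative is precisely the point: for $v\notin V$ the escape alternative can genuinely occur, which is exactly why axiom~(4) is not imposed there and why the relative (rather than full) branching system is the right object.

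Finally, axiom~(5) reduces to observing that $f_{ij}=\g_i^{-1}|_{D_j}$ and the restriction $\g_i|_{R_{ij}}$ are mutually inverse bijections between $D_j=D_{r(ij)}$ and $R_{ij}$; they are measurable, satisfy $f_{ij}\circ f_{ij}^{-1}=\mathrm{Id}_{R_{ij}}$ and $f_{ij}^{-1}\circ f_{ij}=\mathrm{Id}_{D_j}$, and the transported measures are absolutely continuous by the first remark. Assembling (1)--(5) completes the verification that $\{R_{ij},D_j,f_{ij}\}$ is a $(\G,V)$-relative branching system.
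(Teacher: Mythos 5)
Your proof is correct and follows essentially the same route as the paper's: conditions (1)--(3) and (5) are dispatched as routine (with the counting measure trivializing the a.e.\ and absolute-continuity clauses), and the crux, condition (4), is settled by combining the hypothesis $\widehat a_{i\,\iota(x)}=0$ (which excludes transitions into $E_{\iota(x)}$) with membership in $X$ (which excludes transitions into the other escape intervals $E_j$, $j\neq \iota(x)$). The only cosmetic difference is that you argue pointwise, by a dichotomy on $\g(z)$ for $z\in D_v$, whereas the paper runs the same two facts through a chain of set identities $D_i=\g_i^{-1}\left(\cup_l I_l\cup_{j\neq k}E_j\right)\cap X=\cdots=\bigcup_{il:\,s(il)=i}R_{il}$.
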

\begin{proof}
 
 We have to check conditions (1) to (5) of Def.\ \ref{branchsystem}.
 
Clearly, $I_j\cap I_k \cap X = \emptyset$ for every $i\neq j$ and this implies Conditions~(1) and (2). Conditions~(3) and (5) are straightforward.

To check Condition~(4), let $i$ be a vertex in $V$. Let $\hat{k}=\iota(x)$. Since, by hypothesis, $ \hat{a}_{i\iota(x)} =0$, we have that $Im(\g_i) \subseteq \cup_{l} I_l \cup_{j\neq k} E_j$. Hence,
\[D_i=I_i\cap X = \g_i^{-1}\left(\cup_l I_l \cup_{j\neq k} E_j\right)\cap X = \left(\cup_l  \g_i^{-1}( I_l)\cap X\right)\cup\left(\g_i^{-1}\left(\cup_{j\neq k} E_j\right) \cap X\right) \] 
\[= \bigcup_{l:\g(I_i)\supseteq I_l}  \g_i^{-1}( I_l)\cap X = \bigcup_{l: \g(I_i)\supseteq I_l}  \g_i^{-1}( I_l\cap X)= \bigcup_{il: s(il)=i} R_{il}, \]
where in the equalities above we used that $\g_i^{-1}( E_j) \cap X = \emptyset$ for $j\neq k$ (since if $\g^i(y)\in E_j$, $j\neq k$, then $y\notin X$) and $\g_i^{-1}( I_l)\cap X = \g_i^{-1}( I_l\cap X)$ (since if $\g^i(z)\in X$ then $z\in X$).

Therefore, all conditions of the definition of a relative branching system are satisfied, as desired.

\end{proof}

Notice that the condition $ \hat{a}_{i\iota(x)} =0$ for $i\in V$ is necessary to obtain a representation from an interval map and it was also required above. With this in mind, next we prove the equivalence between the representation associated with an interval map and the one arising from the branching system above.

\begin{theorem}\label{repequivalence} Let $ \nu_x$ be the representation of $C^*(\mathcal{G},V)$ as in Thm.\ \ref{thmnice100}. Let $(R_{ij}, D_{j},f_{ij})$ be the branching system on $X$ given in Def.\ \ref{branchifs} and $\pi:C^*(\mathcal{G},V) \to B(\mathcal{L}^2(X,\mu))$ be the associated representation of $C^*(\mathcal{G},V)$ (as in Prop.\ \ref{repinducedbybranchingsystems}). Then, $ \mathcal{L}^2(X) = H_x $ and $\pi$ and $ \nu_x$  coincide.

\end{theorem}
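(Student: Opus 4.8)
The plan is to verify the claimed equality of Hilbert spaces and then check that the two representations agree on generators.

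First I would establish that $\mathcal{L}^2(X,\mu) = H_x$ as Hilbert spaces. Since $\mu$ is the counting measure on the countable set $X$, the space $\mathcal{L}^2(X,\mu)$ is canonically $\ell^2(X)$, with orthonormal basis the indicator functions $\{\chi_{\{z\}} : z\in X\}$. On the other hand, $H_x = \ell^2(R_\g(x))$ has orthonormal basis the vectors $\{|z\rangle : \g^k(z) = e(x) \text{ for some } k\in\N_0\}$. Comparing this basis with the description of $X$ in \eqref{eqXmu}, the sets indexing the two bases coincide, so the identification $\chi_{\{z\}} \leftrightarrow |z\rangle$ is a unitary isomorphism $\mathcal{L}^2(X,\mu) \cong H_x$. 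I would spell out this identification explicitly, since all subsequent comparisons of operators are made through it.

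Next I would check that $\pi$ and $\nu_x$ coincide on generators, which by universality and continuity suffices. For the vertex projections, $\pi(p_i)(\phi) = \chi_{D_i}\phi$ with $D_i = I_i\cap X$; applied to the basis vector $\chi_{\{y\}}$ this gives $\chi_{I_i}(y)\,\chi_{\{y\}}$, matching $P_i|y\rangle = \chi_{I_i}(y)\,|y\rangle$ from \eqref{eqPi}. For the edge generators, the key point is to compute the Radon–Nikodym factor: because $\mu$ is the counting measure, every measure-theoretic map between countable sets is nonsingular with Radon–Nikodym derivative identically $1$ on its domain, so $\Phi_{f_{ij}^{-1}}^{1/2} \equiv 1$ on $R_{ij}$. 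Hence $\pi(s_{ij})(\phi) = \chi_{R_{ij}}\cdot(\phi\circ f_{ij}^{-1})$. Evaluating on $\chi_{\{y\}}$ and recalling that $f_{ij} = \g_i^{-1}|_{I_j}$, so that $f_{ij}^{-1} = \g_i|_{R_{ij}}$ sends $\g_i^{-1}(y)$ to $y$, I would verify that this produces $\chi_{I_j}(y)\,\chi_{\{\g_i^{-1}(y)\}}$, which matches $S_{ij}|y\rangle = \chi_{I_j}(y)\,|\g_i^{-1}(y)\rangle$ from \eqref{eqSij}.

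The main obstacle I anticipate is the careful bookkeeping of domains and the Radon–Nikodym factor, rather than any deep difficulty. Specifically, I must confirm that $f_{ij}$ is genuinely defined on all of $D_j$ and maps bijectively onto $R_{ij}$ as required by Def.\ \ref{branchsystem}(5) — this rests on the hypothesis $\hat{a}_{i\iota(x)}=0$ for $i\in V$ and on Prop.\ \ref{propintmapsbsysts}, which guarantees that the branching system is well defined — and that the counting-measure argument forcing $\Phi\equiv 1$ is stated cleanly. Once the unitary identification of the two spaces is fixed and the Radon–Nikodym derivatives are seen to be trivial, the agreement of $\pi$ and $\nu_x$ reduces to the two basis computations above, and the universal property of $C^*(\G,V)$ then upgrades agreement on generators to equality of the representations.
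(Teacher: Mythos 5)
Your proposal is correct and follows essentially the same route as the paper's proof: identify $\mathcal{L}^2(X,\mu)$ with $H_x=\ell^2(R_\g(x))$ via the counting measure, note that the Radon--Nikodym derivatives are identically $1$ so that $\pi(s_{ij})(\phi)=\chi_{R_{ij}}\cdot(\phi\circ f_{ij}^{-1})$ and $\pi(p_j)(\phi)=\chi_{D_j}\cdot\phi$, and then check agreement with $\nu_x$ on the basis vectors $\chi_{\{y\}}\leftrightarrow|y\rangle$, which suffices since the $s_{ij}$ and $p_j$ generate $C^*(\G,V)$. The only cosmetic difference is that you make the unitary identification of the two Hilbert spaces and the well-definedness of the branching system (via Prop.\ \ref{propintmapsbsysts}) explicit, which the paper treats as immediate.
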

\begin{proof}

Since the measure on $X$ is the counting measure, it is clear that $H_x=\ell^2(R_\g(x)) =\mathcal{L}^2(X)$. Also, since the measure on $X$ is the counting measure, we have that 
 $\pi(s_{ij})(\phi)= \chi_{R_{ij}} \cdot \phi \circ f_{ij}^{-1}$ and $\pi(p_{j})(\phi)=\chi_{D_{j}}\cdot\phi$, for all $e \in \mathcal{G}^1, v \in \mathcal{G}^0$ and $\phi \in \mathcal{L}^2(X,\mu)$. 
 
 Let $y\in R_\g(x)$ and denote by $\chi_y$ the characteristic function of $y$ in $\mathcal{L}^2(X)$ (which can be identified with the vector $y$ in Dirac notation). Then,
 \begin{eqnarray*}
\pi(s_{ij})(\chi_y)&=& \chi_{R_{ij}} \cdot \chi_y \circ f_{ij}^{-1} = \chi_{\g_i^{-1}(I_j\cap X)}\cdot \chi_{\g_i^{-1}(y)} = \chi_{\g_i^{-1}(I_j\cap X)\cap \g_i^{-1}(y)} \\
&=&\begin{cases} 0 \text{ if } y\notin I_j\cr 1 \text{ if}\ y\in I_j \end{cases} =  \nu_x(s_{ij})(\chi_y),
 \end{eqnarray*}
 and
  \[\pi(p_{i})(\chi_y)= \chi_{D_{i}} \cdot \chi_y  = \chi_{I_i\cap X \cap \{y\}} =\begin{cases} 0 \text{ if $y\notin I_i $} \\ 1 \text{ if $y\in I_1 $} \end{cases}\]
    \[ =  \nu_x(p_i)(\chi_y).\]

So, $\pi$ and $\nu_x$ coincide, as desired.

\end{proof}

\section{Applications}
\label{secapps}

In this section, we apply the previous results to describe faithfulness of representations arising from Markov maps with non-empty escape set.

Let $\g\in M(I)$ such that $E_g\neq \emptyset$ and $\widehat{A}_\g$ be its escape transition matrix (see Def.\ \ref{mmx}). For $x\in E_\g$, let us define the following subset of $ \G_{A_\g}^0$:
\begin{equation}\label{eqvx}
V_x=\{k\in \G_{A_\g}^0: \hat{a}_{k\, \iota(x)}=0\}.
\end{equation}
Thus $\hat{a}_{k\, \iota(x)}=1$ for $k\notin V_x$.

The main result of the section is the following.

\begin{theorem}\label{injectiveMap} Let $\g\in M(I)$, $x\in E_\g$, $V\subseteq \mathcal{G}^0$ such that $ \hat{a}_{i\iota(x)} =0$ for $i\in V$, and $\pi$ be the induced representation of $C^*(\mathcal{G},V)$ associated with the branching system of Def.\ \ref{branchifs}, as in Thm.~\ref{repequivalence}. Then, $\pi$ is faithful if, and only if, the following conditions are satisfied.
\begin{enumerate}[(a)]
    \item  For every $j$, there exists $y\in I_j$ such that $\g^n(y)=e(x)$ for some $n\in \mathbb{N}$.
    \item $V=V_x.$
    \item For each $j \in V$, such that $j$ is a base point of a simple cycle $\alpha=\alpha_1 \ldots \alpha_k$ which has no exit, and for each finite set $\{n_1,\ldots, n_l\}\subseteq \N$, there exists a point $x\in I_j\cap X$ such that $f_{\alpha}^{n_i}(x) \neq x$ for all $i=1, \ldots, l$, that is, such that $$\left( g_{s(\alpha_1)}^{-1}|_{I_{r(\alpha_1)}}\circ \ldots \circ g_{s(\alpha_k)}^{-1}|_{I_{r(\alpha_k)}} \right)^{n_i}(x)\neq x,$$ for all $i=1, \ldots, l$.

\end{enumerate}

\end{theorem}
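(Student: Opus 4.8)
The plan is to read Thm.~\ref{injectiveMap} off Thm.~\ref{jacare11}, applied to the relative branching system $(R_{ij},D_j,f_{ij})$ of Def.~\ref{branchifs}. By Prop.~\ref{propintmapsbsysts} this is a genuine $(\G,V)$-relative branching system under the standing hypothesis $\hat a_{i\iota(x)}=0$ for $i\in V$ (equivalently $V\subseteq V_x$), so $\pi$ is faithful if and only if conditions (1)--(3) of Thm.~\ref{jacare11} hold. Since Thm.~\ref{jacare11} is itself an ``if and only if'', it suffices to rewrite each of (1), (2), (3) in the dynamical language of $\g$ and verify that they become exactly (a), (b), (c). First I would dispose of (1)$\Leftrightarrow$(a): as $D_j=I_j\cap X$, non-emptiness of every $D_j$ says that each $I_j$ contains a point $y$ with $\g^n(y)=e(x)$ for some $n\in\N_0$; because $e(x)$ lies in an escape interval $E_k$, which is disjoint from every $I_j$, the exponent $n=0$ is never available, so this is precisely (a) with $n\in\N$.

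For (2)$\Leftrightarrow$(b) I would use that $V\subseteq V_x$ always holds and split on whether a vertex $v\notin V$ lies in $V_x$. If $v\in V_x$, i.e.\ $\hat a_{v\iota(x)}=0$, the computation in the proof of Prop.~\ref{propintmapsbsysts} applies verbatim and gives $D_v=\bigcup_{e\in s^{-1}(v)}R_e$, so (2) fails at $v$. If instead $v\notin V_x$, i.e.\ $\hat a_{v\iota(x)}=1$ with $\hat k=\iota(x)$, then $\mathring I_v\cap\g^{-1}(E_k)\neq\emptyset$, and by the Markov property this forces $E_k\subseteq\g(I_v)$; since $e(x)\in E_k$, monotonicity of $\g_v$ yields $z\in I_v$ with $\g(z)=e(x)$, hence $z\in D_v$ (as $z\in X$ via $\g^1(z)=e(x)$) but $z\notin\bigcup_l\g_v^{-1}(I_l\cap X)=\bigcup_{e\in s^{-1}(v)}R_e$ (because $\g(z)\in E_k$ lies in no $I_l$), so (2) holds at $v$. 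Thus (2) holds for all $v\notin V$ exactly when $V_x\setminus V=\emptyset$, i.e.\ when $V=V_x$, which is (b).

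The \emph{main obstacle} is the equivalence (3)$\Leftrightarrow$(c), which needs both a graph-structural reduction and a measure-theoretic one. The structural point I would establish first: if $j$ is the base point of a cycle with no exit, then $s^{-1}(s(e_i))=\{e_i\}$ along that cycle, so every vertex on it has a unique outgoing edge; hence there is a \emph{unique} simple no-exit cycle $\alpha$ based at $j$, and every cycle based at $j$ is a power $\alpha^m$ with $f_{\alpha^m}=f_\alpha^{\,m}$. Consequently the arbitrary finite family $\{\alpha^i\}_{i=1}^n$ of cycles appearing in (3) is just a finite family of powers $\{\alpha^{n_i}\}$, matching the finite exponent set $\{n_1,\dots,n_l\}$ of (c). This also confirms that ``$j$ is a base point of a cycle with no exit'' (condition (3)) and ``$j$ is a base point of a simple cycle with no exit'' (condition (c)) select the same vertices.

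Finally I would carry out the measure-theoretic reduction, using that $\mu$ is the counting measure: a measurable $F\subseteq D_j$ satisfies $\mu(F)\neq0$ iff $F\neq\emptyset$, and one may restrict to single points. Indeed, given a witness $F$ for (3), any $p\in F$ satisfies $f_\alpha^{\,n_i}(p)\neq p$ for all $i$, since $f_\alpha^{\,n_i}(p)\in f_\alpha^{\,n_i}(F)$, which is $\mu$-a.e.\ disjoint from $F\ni p$; conversely, a point $p\in I_j\cap X$ with $f_\alpha^{\,n_i}(p)\neq p$ for all $i$ makes $F=\{p\}$ a valid witness. This turns the set-theoretic disjointness in (3) into the pointwise statement in (c). Combining the three equivalences with Thm.~\ref{jacare11} then yields Thm.~\ref{injectiveMap}. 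The delicate points I expect to watch are the cycle-uniqueness argument (making ``no exit'' genuinely force powers of a single simple cycle) and, in step (2), checking that the escape-interval argument produces a point of $X$ and not merely of $I_v$.
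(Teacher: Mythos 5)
Your proposal is correct and takes essentially the same route as the paper's proof: both reduce the statement to Thm.~\ref{jacare11} applied to the branching system of Def.~\ref{branchifs} and then verify that Conditions (1), (2), (3) there translate into (a), (b), (c), with your treatment of (2)$\Leftrightarrow$(b) resting on the same decomposition $D_k=\bigcup_i \g_k^{-1}(I_i\cap X)\cup \g_k^{-1}(E_j\cap X)$ that the paper uses. The only difference is that you supply details the paper leaves implicit — notably the $\g(\Gamma)\subseteq\Gamma$ argument showing $\hat a_{v\iota(x)}=1$ forces $E_k\subseteq \g(I_v)$ and hence a preimage of $e(x)$ in $X$, the reduction of arbitrary no-exit cycles to powers of a single simple cycle, and the counting-measure passage from sets $F$ to single points — all of which are correct.
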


\begin{proof}

Consider the branching system \[\left(R_{ij} = \g_{i}^{-1}( I_j \cap X), D_{j}= I_j \cap X, f_{ij}= \g_i^{-1}|_{I_J}\right),\] of Def.\ \ref{branchifs}.
The result follows from Thm.~\ref{jacare11}, once we show that the Conditions (1), (2) and (3) of that theorem, with respect to the above branching system, are equivalent to Conditions (a), (b) and (c) above.

Clearly, Condition (a) above is equivalent to Condition (1) in Thm.~\ref{jacare11} and Condition (c) is equivalent to Condition (3) (since we are using the counting measure on $X$). Suppose that Condition (b) holds. To check  Condition~(2), let $\hat{j}=\iota(x)$. Notice that for $k\notin V_x$, since $\hat{a}_{k\iota(x)} =1$, we have that \begin{equation}\label{dk} D_k=\bigcup_i \g_k^{-1}(I_i\cap X) \bigcup \g_k^{-1}(E_{j} \cap X),
\end{equation}
where $\g_k^{-1}(E_{j} \cap X)$ is non-empty.  Since $E_{j} \cap X$ is disjoint from $I_i\cap X$ for every $i$, it follows that $D_k\neq \cup R_{ki}$ as desired. Now, suppose that Condition~(2) holds. We have to prove that $V_x\subseteq V$, since the other inclusion is always true. Suppose that $k\notin V$. Then, from Equation~\ref{dk} and Condition~(2), we have that $\g_k^{-1}(E_{j} \cap X)$ is non-empty, that is, $\hat{a}_{k\, \iota(x)}=1$ (notice that $E_{j} \cap X =  \iota(x)) $. Hence, $k\notin V_x$.
\end{proof}

The above theorem allows us to recover  \cite[Thm.\ 3.1]{RMP20}, with the further improvement of not asking the Markov map to be expansive or aperiodic. 

\begin{corollary}\label{coelho}(cf.\ Thm.\ \ref{rmp20theorem}) Let $\g\in M(I)$ be irreducible and $x\in E_\g$. 
Then, $\nu_x$ is faithful.
\label{cor1}
\end{corollary}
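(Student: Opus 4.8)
The plan is to invoke Theorem~\ref{injectiveMap} with the choice $V = V_x$, and verify that each of its three conditions (a), (b), (c) holds automatically when $\g$ is irreducible. Since (b) is simply the requirement $V = V_x$, it holds by our choice of $V$; this choice is legitimate because the standing hypothesis $\hat a_{i\iota(x)} = 0$ for $i \in V$ is exactly the defining property of $V_x$ in \eqref{eqvx}. So the real work is in establishing (a) and (c) from irreducibility.

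\emph{Condition (a).} I would first show that every interval $I_j$ contains a point whose $\g$-orbit reaches the final escape point $e(x)$. The idea is to use the irreducibility property (P4): for the index $\iota(x) = \hat k$, there is an escape interval $E_k$ into which the orbit of $x$ eventually falls, and $x$ itself lands in some Markov interval before escaping. By irreducibility, for each $j$ there exists $q$ with $I_j \subset \g^q(I_\ell)$ for the relevant $\ell$; chaining such inclusions, I would trace a preimage path from $e(x)$ back through the Markov intervals into $I_j$. Concretely, since $\g$ maps $I_j$ surjectively onto a union of Markov intervals (P1, P2), and irreducibility guarantees that from any interval one can reach, in finitely many steps, an interval $I_i$ with $\hat a_{i\iota(x)} = 1$ (i.e.\ an interval whose image meets $\g^{-1}(E_k)$), one can build a finite forward orbit from a suitable $y \in I_j$ terminating at $e(x)$. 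This yields a $y \in I_j$ with $\g^n(y) = e(x)$, which is precisely (a).

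\emph{Condition (c).} The key point here is that irreducibility precludes the pathological configuration that (c) guards against. I would argue that when $\g$ is irreducible and $E_\g \neq \emptyset$, \emph{no} vertex $j \in V_x$ can be the base point of a simple cycle that has no exit. Indeed, a no-exit cycle at $j$ would mean $s^{-1}(s(\alpha_i)) = \{\alpha_i\}$ along the cycle, forcing the vertices of the cycle to form a closed subsystem with no transitions leaving it; this would contradict irreducibility (P4), which forces mutual reachability among all intervals, including the escape-reaching ones not on the cycle. Hence condition (c) is satisfied vacuously, there being no such cycle to check. I would phrase this as: irreducibility implies every cycle has an exit, so the hypothesis of (c) is never triggered.

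The main obstacle I anticipate is making the reachability argument for (a) fully rigorous, in particular translating the dynamical irreducibility condition (P4), stated in terms of $I_j \subset \g^q(I_i)$, into the existence of an actual orbit segment landing on the specific point $e(x)$ rather than merely on the interval $E_k$. One must be careful that $e(x)$ is a genuine escape point reachable from each $I_j$, and that the preimages selected stay in $X$ (i.e.\ avoid the boundary set $\Gamma$ and avoid escaping prematurely into a wrong $E_j$). Once (a) and (c) are in place, the conclusion that $\nu_x$ is faithful follows immediately from Theorem~\ref{repequivalence} (identifying $\pi$ with $\nu_x$) together with Theorem~\ref{injectiveMap}.
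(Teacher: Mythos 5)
Your overall strategy (apply Thm.~\ref{injectiveMap} with $V=V_x$, so that condition (b) is automatic) is the same as the paper's, but your verification of condition (c) contains a genuine error. You claim that irreducibility forces every cycle in $\G_{A_\g}$ to have an exit, so that (c) holds vacuously. This is false: a strongly connected graph consisting of a single cycle is irreducible, yet its cycle has no exit. The paper's own second example is exactly of this kind: the $3$-cycle $1\to 2\to 3\to 1$ arises from an irreducible Markov map with $E_\g\neq\emptyset$ and $V_x=\{2,3\}$, and the vertices $2,3\in V_x$ are base points of a no-exit cycle. (Note that escape transitions are recorded only in $\widehat{A}_\g$, not as edges of $\G_{A_\g}$, so they do not provide ``exits'' in the graph-theoretic sense required.) The paper's actual argument for (c) is dynamical rather than graph-theoretical: if (c) failed at such a vertex $j$, there would be a finite set $\{n_1,\ldots,n_k\}$ such that every $z\in I_j\cap X$ satisfies $f_\alpha^{n_i}(z)=z$ for some $n_i$, i.e.\ every such $z$ would be $\g$-periodic; but no point of $X$ can be periodic, since its forward orbit reaches $e(x)$ and then terminates in an escape interval. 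Hence $I_j\cap X=\emptyset$, contradicting condition (a). So (c) does hold, but for a reason entirely different from the one you give.

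Your treatment of (a) also differs from the paper's and is harder to close as stated: reaching an interval $I_i$ with $\hat a_{i\iota(x)}=1$ only guarantees $\g(\mathring I_i)\cap E_k\neq\emptyset$, not that the particular point $e(x)\in E_k$ lies in $\g(I_i)$ (the image may cover $E_k$ only partially); this is precisely the obstacle you flag at the end without resolving. The paper sidesteps it by pulling back to $x$ itself rather than to $e(x)$: since $x\in E_\g\subset \mathrm{dom}(\g)$, we have $x\in I_l$ for some $l$; irreducibility gives $q$ with $I_l\subset\g^q(I_j)$, hence a point $y\in I_j$ with $\g^q(y)=x$, and then $\g^{\tau(x)+q}(y)=e(x)$. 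This lands exactly on $e(x)$, and (as the paper notes, using $\g(\Gamma)\subseteq\Gamma$) such a $y$ automatically lies in $X$, so no separate care about $\Gamma$ or premature escape is needed.
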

\begin{proof}

We have to check the conditions of 
Thm.\ \ref{injectiveMap}. First we verify (a). Since $x\in E_g$, $e\left( x\right)=\g^{\tau \left( x\right)}\left( x\right) $ and $x\in I_l$ for some $l$. By the irreducibility of $g$, for every $j$ there exists $q\in \N$ such that $g^q(I_j)\supset I_l$. In particular, there is $y\in I_j$ such that $g^q(y)= x$ and hence $g^{\tau(x)+q}(y)=e\left(x\right)$ as desired. 

Condition~(b) of Thm.\ \ref{injectiveMap} follows by definition. If every simple cycle in $\G$ based at a vertex $j\in V$ has an exit, then Condition~(c) is trivially satisfied. Suppose that there exists a vertex $j\in V$ that is the base of a simple cycle and Condition~(c) is not satisfied. Then, there exists a finite set $F=\{n_1,\ldots, n_k\}$ such that for every $z\in I_j\cap X$, there exists $n_j\in F$ such that $z$ is periodic of period $n_j$ with respect to $g$, that is, $g^{n_j}(z)=z$.  Since points in $X$ can not be periodic (since they are in the orbit of the point $x$, which is in the escape set) we conclude that $I_j\cap X = \emptyset$, what contradicts what we proved above (for Condition~(a)).


\end{proof}





\begin{remark}
The concepts of purely atomic, monic, and permutative representations of relative graph algebras have not, as far as we know, been defined in the literature yet. Nevertheless, their definition should be the expected extension of the concepts defined for graph C*-algebras. With this in mind, if $ \nu_x$ is the representation of $C^*(\mathcal{G},V_x)$ as in Thm.\ \ref{thmnice100} associated to an expansive, aperiodic Markov map $\g$, then $\nu_x$ is irreducible (by \cite[Cor.\ 3.7]{RMP20}). Since $\nu_x$ is equal to $\pi:C^*(\mathcal{G},V_x) \to B(\mathcal{L}^2(X,\mu))$, where $\pi$ is  associated to the relative branching system $(R_{ij}, D_{j},f_{ij})$ described in Theorem~\ref{repequivalence}, we obtain that the representation $\rho: C^*(E(\G)) \to B(L^2(X,\mu))$ of the extended graph is irreducible (see Prop.\ \ref{repcommute}). Applying the theory of representations associated with branching systems, we obtain, by \cite[Thm.\ 5.3]{FGG}, that $\rho$ is purely atomic and, by \cite[Thm.\ 5.5]{FGG}, that $\rho$ is monic. Finally, by \cite[Thm.\ 4.10]{FGJKP}, if $\rho$ is supported on an orbit of an aperiodic path, then it is permutative. Of course, the statements about $\rho$ above should be reflected in $\nu_x$, once they have been defined for relative graphs (we do not do it here in the interest of space).

\end{remark}



\section{Examples}

In this section, we present a number of examples to illustrate the improvements obtained with Thm.\ \ref{injectiveMap} and Cor.\ \ref{coelho}.

The following examples exhibits a non-expansive aperiodic Markov interval map $\g\in M(I)$, satisfying Cor.\ \ref{cor1}. 


\begin{example}[A non-expansive and aperiodic Markov interval map] 
Let $\g$ be the piecewise linear map
\[
\g\left( x\right) =\left\{
\begin{array}{l}
\dfrac{5}{11}x+\dfrac{4}{11}\text{ \ \ \ \ \ \ if \ \ }0\leq x\leq c_{1},\\
\\
\dfrac{22}{7}x-\dfrac{9}{14}\text{ \ \ \ \ \ \ if \ \ }c_{1}\leq x\leq
c_{2},\\ \\
\dfrac{55}{3}x-\dfrac{22}{3}\text{\ \ \ \ \ \ \ if \ \ }c_{3}\leq x\leq
c_{4},\\ \\
\dfrac{37}{22}x-\dfrac{21}{44}\text{ \ \ \ \ \ \ if \ \ }c_{5}\leq x\leq
c_{6},\\ \\
\dfrac{5}{11}x+\dfrac{1}{22}\text{ \ \ \ \ \ \ if \ \ }c_{6}\leq x\leq 1,%
\end{array}%
\right.
\]
where $(c_{0},c_{1},c_{2},c_{3},c_{4},c_{5},c_{6},c_{7})$=$(0,3/10,4/11,4/10,5/11,5/10,7/10,1)$.
The graph of $\g$ is as follows:

\centerline{\includegraphics[scale=.380]{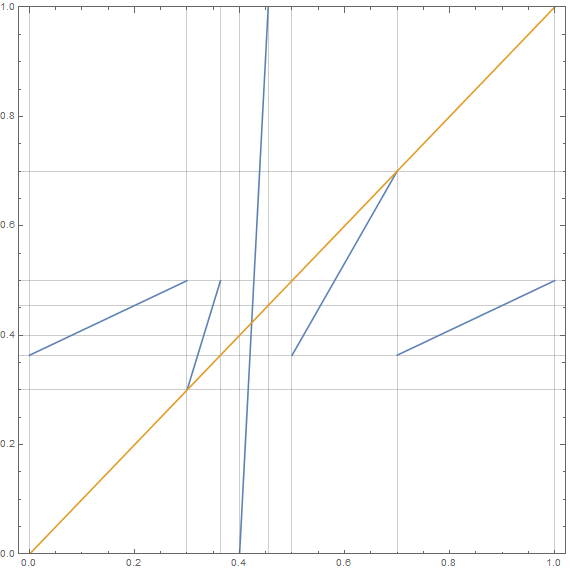}}

We partition the interval $[0,1]$ as follows. Let $I_{1}=\left[ 0,c_{1}\right] $, $I_{2}=\left[ c_{1},c_{2}\right] $, 
$I_{3}=\left[ c_{3},c_{4}\right] $, $I_{4}=\left[ c_{5},c_{6}\right] $, 
$I_{5}=\left[ c_{6},1\right]$, with escape intervals $E_{3}=\left]
c_{2},c_{3}\right[ $, $E_{5}=\left] c_{4},c_{5}\right[ $.

Then, $\g$ is a Markov interval map (not expansive, due to its behavior in the intervals $I_{1}$, $I_{5}$). 

The transition matrix $A_\g$ and the escape transition matrix $\widehat{A}_\g$ are as follows
\[
A_\g=\left(
\begin{array}{ccccccc}
0 & 0  & 1 &  0 & 0 \\
0 & 1  & 1 &  0 & 0 \\
1 & 1  & 1 &  1 & 1 \\
0 & 0  & 1 &  1 & 0 \\
0 & 0  & 1 &  0 & 0%
\end{array}%
\right),\qquad \widehat{A}_\g=\left(
\begin{array}{ccccccc}
0 & 0 & 1 & 1 & 1 & 0 & 0 \\
0 & 1 & 1 & 1 & 1 & 0 & 0 \\
0 & 0 & 0 & 0 & 0 & 0 & 0 \\
1 & 1 & 1 & 1 & 1 & 1 & 1 \\
0 & 0 & 0 & 0 & 0 & 0 & 0 \\
0 & 0 & 1 & 1 & 1 & 1 & 0 \\
0 & 0 & 1 & 1 & 1 & 0 & 0%
\end{array}%
\right).
\]%
The directed graph $\G_{A_\g}$ is as follows
$$
\begin{tikzpicture}[->,>=stealth',shorten >=1pt,auto,node distance=3cm,
                    thick,main node/.style={circle,draw,font=\sffamily\bfseries}]

  \node[main node] (1) {$1$};
  \node[main node] (2) [right of=1] {$2$};
  \node[main node] (3) [below right of=1] {$3$};
  \node[main node] (4) [below right of=3] {$4$};
  \node[main node] (5) [below left of=3] {$5$};

\path [every node/.style={font=\sffamily\small}]
 
(1) edge[->] (3)
(3) edge [bend right] (1)
(2) edge (3)
(3) edge [bend right] (2)
(4) edge (3)
(3) edge [bend right] (4)
(5) edge (3)
(3) edge[bend right] (5)
(2) edge[loop above] (2)
(3) edge[loop right] (3)
(4) edge [loop below]   (4);
\end{tikzpicture}
$$

The elimination of the escape states produces an aperiodic matrix $A_\g$, since
every entry in $A_\g^{2}$ is nonzero.
Since $E_i\cap \g(I_j)\not=\emptyset$ for $i=3,5$ and $j=1,2,3,4,5$, we have $V_x=\emptyset$ whenever $x\in E_\g$. 
We have two (one for each escape interval) representations $\nu_x$ of the \C-algebra $\Cstar(\G_{A_\g},\emptyset)$, which are faithful by Cor.\ \ref{cor1}.
\end{example}


\begin{example}[An non-expansive, irreducible, non-aperiodic,  Markov map with only one cycle]

Let $g$ be the following piecewise linear map with

\begin{equation*}
g\left( x\right) =\left\{
\begin{array}{l}
\dfrac{11}{4}x+\dfrac{1}{5}\text{ \ \ \ \ \ \ \ if \ \ }0\leq x\leq
c_{1}^{-} \\ \\
\dfrac{1}{2}x+\dfrac{5}{8}\text{ \ \ \ \ \ \ \ if \ \ }c_{1}^{+}\leq
x\leq c_{2} \\ \\
\dfrac{4}{5}x-\dfrac{3}{5}\text{\ \ \ \ \ \ \ \ if \ \ }c_{2}\leq x\leq
1%
\end{array}%
\right.
\end{equation*}
where
\begin{equation*}
(c_{0},c_{1}^{-},c_{1}^{+},c_{2},c_{3}) =(0,1/5,1/4,3/4,1).
\end{equation*}

The graph of $g$ is a follows:

\centerline{\includegraphics[scale=.380]{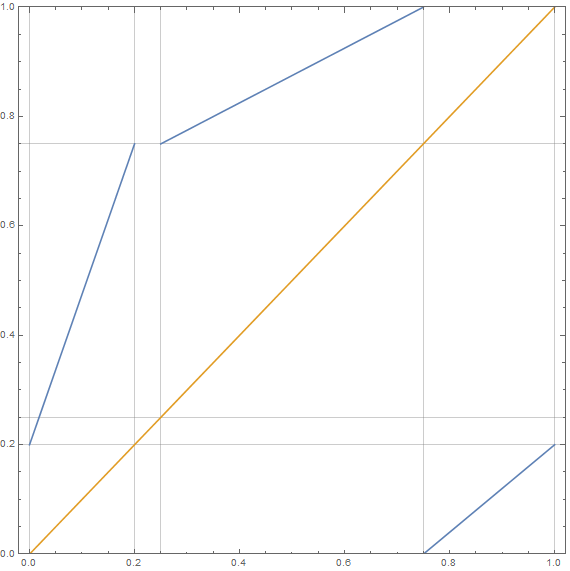}}

Then $g\in M(I)$ with partition being $I_{1}=\left[0,c_{1}^{-}\right] $, $I_{2}=\left[ c_{1}^{+},c_{2}\right]
$, $I_{3}=\left[ c_{2},c_{3}\right] $ and the escape interval set  $E_\g=E_{1}=\left]
c_{1}^{-},c_{1}^{+}\right[$.

Then  the transition matrix $A_g$ and  escape transition matrix $\widehat{A}_g$ are as follows
\begin{equation*}
A_g=\left(
\begin{array}{ccc}
0  & 1 & 0 \\
0  & 0 & 1 \\
1  & 0 & 0%
\end{array}%
\right),\ \ 
\widehat{A}_g=\left(
\begin{array}{cccc}
0 & 1 & 1 & 0 \\
0 & 0 & 0 & 0 \\
0 & 0 & 0 & 1 \\
1 & 0 & 0 & 0%
\end{array}%
\right).
\end{equation*}%

Clearly, $A_g$ is a non-aperiodic and irreducible matrix whose directed graph $\G_{A_g}$ is as follows:
$$\begin{tikzpicture}[->,>=stealth',shorten >=1pt,auto,node distance=3cm,
                    thick,main node/.style={circle,draw,font=\sffamily\bfseries}]

  \node[main node] (1) {$1$};
  \node[main node] (2) [right of=1] {$2$};
  \node[main node] (3) [below of=1] {$3$};

\path [every node/.style={font=\sffamily\small}]

(1) edge[->] (2)
(2) edge[->] (3)
(3) edge[->] (1);
\end{tikzpicture}
$$

In this example, we have $V_x=\{2,3\}$ for $x\in E_\g$ and Cor.\ \ref{cor1} guarantees that $\nu_x$ is a faithful representation of $\Cstar(\G_g,\{2,3\})$. 

\end{example}

\begin{example}[A non-expansive, non-aperiodic, irreducible Markov map with more than one cycle]

Let $g$ be the following map
\begin{equation*}
g\left( x\right) =\left\{
\begin{array}{l}
\dfrac{2}{3}x+\dfrac{1}{5}\text{ \ \ \ \ \ \ \ if \ \ }0\leq x\leq c_{1},
\\   \\
\dfrac{5}{2}x-\dfrac{1}{6}\text{ \ \ \ \ \ \ \ if \ \ }c_{1}\leq x\leq
c_{2}, \\   \\
3x-1\text{\ \ \ \ \ \ \ \ if \ \ }c_{2}\leq x\leq
c_{3}^{-}, \\ \\
\dfrac{15}{4}x-\dfrac{3}{2}\text{ \ \ \ \ \ \ \ if \ \ }c_{3}^{+}\leq
x\leq c_{4}, \\   \\
4x-\dfrac{7}{3}\text{ \ \ \ \ \ \ \ if \ \ }c_{4}\leq x\leq
c_{5}, \\ \\
\dfrac{1}{3}x+\dfrac{5}{12}\text{ \ \ \ \ \ \ \ if \ \ }c_{5}\leq x\leq 1,%
\end{array}%
\right.
\end{equation*}
where $
(c_{0},c_{1},c_{2},c_{3}^{-},c_{3}^{+},c_{4},c_{5},c_{6})
=(0,1/5,3/9,2/5,3/5,2/3,3/4,1).
$

The graph of $g$ is a follows:

\centerline{\includegraphics[scale=.380]{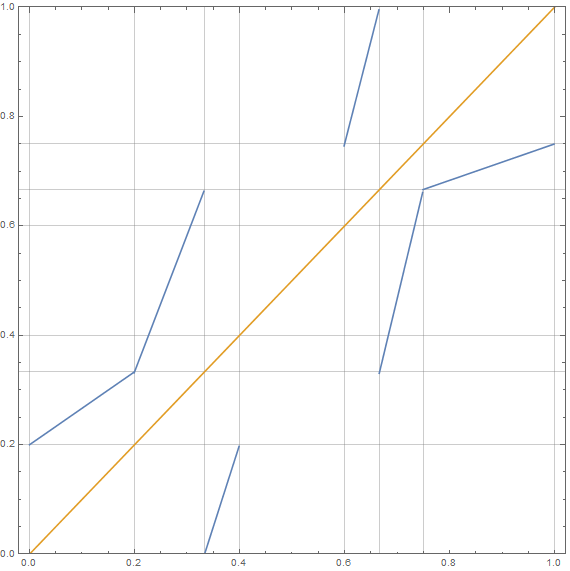}}

Then, $g$ is a Markov map with partition $I_{1}=\left[ 0,c_{1}\right] $, $I_{2}=\left[ c_{1},c_{2}\right] $, 
$I_{3}=\left[ c_{2},c_{3}^{-}\right] $, $I_{4}=\left[ c_{3}^{+},c_{4}\right]$, 
$I_{5}=\left[ c_{4},c_{5}\right] $, $I_{6}=\left[ c_{5},1\right] $, and escape set $E_{3}=\left] c_{3}^{-},c_{3}^{+}\right[$.

The transition matrix $A_g$ and escape transition matrix $\widehat{A}_g$ are as follows:
\begin{equation*}
A_g=\left(
\begin{array}{ccccccc}
0 & 1 & 0  & 0 & 0 & 0 \\
0 & 0 & 1 & 1 & 0 & 0 \\
1 & 0 & 0  & 0 & 0 & 0 \\
0 & 0 & 0  & 0 & 0 & 1 \\
0 & 0 & 1  & 1 & 0 & 0 \\
0 & 0 & 0 & 0 & 1 & 0%
\end{array}%
\right),\ \ 
\widehat{A}_g=\left(
\begin{array}{ccccccc}
0 & 1 & 0 & 0 & 0 & 0 & 0 \\
0 & 0 & 1 & 1 & 1 & 0 & 0 \\
1 & 0 & 0 & 0 & 0 & 0 & 0 \\
0 & 0 & 0 & 0 & 0 & 0 & 0 \\
0 & 0 & 0 & 0 & 0 & 0 & 1 \\
0 & 0 & 1 & 1 & 1 & 0 & 0 \\
0 & 0 & 0 & 0 & 0 & 1 & 0%
\end{array}%
\right).
\end{equation*}%

The associated directed graph $\G_{A_\g}$ is the following:

$$\begin{tikzpicture}[->,>=stealth',shorten >=1pt,auto,node distance=3cm,
                    thick,main node/.style={circle,draw,font=\sffamily\bfseries}]

  \node[main node] (1) {$1$};
  \node[main node] (2) [right of=1] {$2$};
  \node[main node] (3) [below of=1] {$3$};
  \node[main node] (4) [below of=2] {$4$};
  \node[main node] (5) [below of=3] {$5$};
  \node[main node] (6) [below of=4] {$6$};

\path [every node/.style={font=\sffamily\small}]

(1) edge[->] (2)
(2) edge[->] (3)
(3) edge[->] (1)
(2) edge[->] (4)
(5) edge[->] (3)
(5) edge[->] (4)
(4) edge[->] (6)
(6) edge[->] (5);

\end{tikzpicture}
$$
In this case, $V_x=\{1,3,5,7\}$ for $x\in E_g$, and Cor.\ \ref{cor1} guarantees that $\nu_x$ is a faithful representation of $\Cstar(\G_g,\{1,3,5,7\})$.
\end{example}

{\bf Acknowledgments.}
Daniel Gon\c{c}alves was partially supported by Conselho Nacional de Desenvolvimento Cient\'{i}fico e Tecnol\'{o}gico - CNPq and Capes-PrInt, Brazil. Correia Ramos's work was partially supported by national
funds through Funda\c c\~ao Nacional para a Ci\^encia e a Tecnologia (FCT),
Portugal, grant UIDB/ 04674/ 2020 and the Centro de Investiga\c c\~ao em
Matem\'atica e Aplica\c c\~oes, Universidade\ de \'Evora. The work of
Martins and Pinto was partially supported by
FCT/Portugal through CAMGSD, IST-ID, projects UIDB/04459/2020 and
UIDP/04459/2020.

\end{document}